\newtheorem{thm}{Theorem}[section]
\newtheorem{lem}[thm]{Lemma}
\newtheorem{cla}[thm]{Claim}
\theoremstyle{definition}
\numberwithin{equation}{section}
\newcommand{\Leg}[2]{\left(\frac{#1}{#2}\right)}
\newcommand{\br}[1]{\left(#1\right)}
\newcommand{\Q}{\mathbb{Q}}
\newcommand{\Z}{\mathbb{Z}}
\newcommand{\af}{\mathfrak{a}}
\newcommand{\Oc}{\mathcal{O}}
\newcommand{\Rf}{\mathfrak{R}}
\newcommand{\Tf}{\mathfrak{T}}
\newcommand{\Lf}{\mathfrak{L}}
\begin{document}


\baselineskip=17pt

\title[Class number one problem]{The class number one problem for the real quadratic fields $\Q\left(\sqrt{(an)^2+4a}\right)$}

\author[A. Bir\'o]{Andr\'as Bir\'o}
\address{A. R\'enyi Institute of Mathematics, Hungarian Academy of Sciences\\
1053 Budapest, Re\'altanoda u. 13-15, Hungary}
\email{biro.andras@renyi.mta.hu}

\author[K. Lapkova]{Kostadinka Lapkova}
\address{A. R\'enyi Institute of Mathematics, Hungarian Academy of Sciences\\
1053 Budapest, Re\'altanoda u. 13-15, Hungary}
\email{lapkova.kostadinka@renyi.mta.hu}


\date{}




\begin{abstract}We solve unconditionally the class number one problem for the $2$-parameter family of real quadratic fields $\Q(\sqrt{d})$ with square-free discriminant
$d=(an)^2+4a$  for $a$ and
$n$ -- positive odd integers.
\end{abstract}

\subjclass[2010]{Primary 11R11; Secondary 11R29\,, 11R42.}

\keywords{class number problem, real quadratic field.}

\maketitle

\section{Introduction}\label{sec:intro}

\hspace{0.5cm}Let us consider the quadratic fields $K=\Q(\sqrt{d})$ with class group $Cl(d)$ and order of the class group denoted by $h(d)$. In this paper we determine all fields $K=\Q(\sqrt{d})$ where $d=(an)^2+4a$ is square-free and $a$ and $n$ are positive odd integers such that the class number $h(d)$ is $1$. It follows from Siegel's theorem that there are only finitely many such fields, but since Siegel's theorem is ineffective, it cannot provide the specific fields with class number one. For this sake we apply the method developed by Bir\'o
in \cite{biro} and we use the result of Lapkova \cite{Lapkova}.\\

We remark that the class number one problem that we consider was
already suggested by Bir\'o in \cite{biroNagoya} as a possible
generalization of his works. The discriminants of the form $d=(an)^2 + ka$ for $\pm
k\in\{1,2,4\}$ are called Richaud-Degert type, so we consider here Richaud-Degert type discriminants with $k=4$. We expect that the same method will work for the other values of $k$ as well.

The class number one problem for special cases of
Richaud-Degert type was solved in
\cite{biro},\cite{biroChowla}, proving the Yokoi and Chowla Conjectures. The method was subsequently applied e.g in
\cite{byeon1} and \cite{LeeComplete}, but in these papers the parameter $a$ is fixed
($a=1$). However, already a subset of positive
density of the discriminants of Richaud-Degert type with $k=4$ are
covered in \cite{Lapkova}.\\

Under the assumption of a Generalized Riemann Hypothesis there is
a list of principal quadratic fields of Richaud-Degert type, see
\cite{MollinAll}, and one can check there that the largest number in that list having the form $d=(an)^2+4a$ is 1253. Here, however, our main result is unconditional:

\begin{thm}\label{thm:1}If $\,d=(an)^2+4a$ is square-free for $a$ and $n$ odd positive integers and $d>1253$, then
$h(d)>1$.
\end{thm}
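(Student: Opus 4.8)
The plan is to combine Dirichlet's class number formula with genus theory to peel off all but one subfamily, and then to run Bir\'o's explicit-formula method on what remains. First I would reformulate $h(d)=1$ analytically. Since $a,n$ are odd we have $d\equiv 5\pmod 8$, so $d$ is itself the discriminant and $\chi_d=\Leg{d}{\cdot}$ is the associated character. A direct computation shows that $\varepsilon_d=\tfrac12\bigl(an^2+2+n\sqrt{d}\bigr)$ is a unit of norm $+1$, and an analysis of the continued fraction expansion of $\sqrt{d}$ gives that the fundamental unit $\varepsilon_0$ satisfies $\log\varepsilon_0\asymp\log(an^2)\asymp\log d$. Dirichlet's formula $L(1,\chi_d)=\frac{2h(d)\log\varepsilon_0}{\sqrt{d}}$ then converts $h(d)=1$ into the smallness statement $\sqrt{d}\,L(1,\chi_d)=2\log\varepsilon_0\asymp\log d$; the object of the proof is the \emph{effective} reverse inequality $\sqrt{d}\,L(1,\chi_d)>2\log\varepsilon_0$ for $d>1253$.

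Next I would reduce the number of cases by genus theory. Writing $d=a(an^2+4)$, the two factors are coprime because $a$ is odd. The number of genera is $2^{\omega(d)-1}$, which divides the narrow class number $h^+(d)\in\{h(d),2h(d)\}$; hence $h(d)=1$ forces $2^{\omega(d)-1}\le h^+(d)\le 2$, that is $\omega(d)\le 2$. For $a>1$ both coprime factors exceed $1$, so this is possible only when $a=p$ and $an^2+4=q$ are both prime, i.e. $d=pq$. The case $a=1$ is $d=n^2+4$, precisely Yokoi's conjecture, already settled in \cite{biro}, where all class number one discriminants lie well below $1253$. It thus remains to handle $d=pq$ with $q=pn^2+4$.

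For this remaining family I would follow \cite{biro}, as in the Chowla case \cite{biroChowla}. The decisive structural fact is that for each fixed $k$ the value $\chi_d(k)=\Leg{d}{k}$ depends, through quadratic reciprocity and the congruence $d\equiv (an)^2+4a$, only on $a$ and on $n$ modulo a small multiple of $k$. This periodicity in $n$ lets one rewrite $\sqrt{d}\,L(1,\chi_d)$ as an explicit main term plus a finite combination of Dedekind-sum/cotangent-type terms that are computable in $a$ and $n$, and the estimate of \cite{Lapkova} bounds these from below, turning the smallness condition into a Diophantine inequality with no solutions once $d>1253$. The main obstacle I anticipate is the \emph{uniform} effective control of the error terms in the two parameters simultaneously: the Yokoi and Chowla settings were one-parameter families ($a=1$), whereas here every estimate must be uniform in $a$ as well, and the implied constants must be explicit and sharp enough to reach exactly the threshold $1253$.
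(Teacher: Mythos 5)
Your genus-theory reduction is sound and does appear in the paper (it is part of Claim 5.1 of \cite{Lapkova}, quoted here as Claim \ref{factB}: $a$ and $an^2+4$ must both be prime, and in fact $\Leg{d}{p}=-1$ for all primes $2<p<an/2$, $p\neq a$). But the analytic core of your plan has a genuine gap. You reduce $h(d)=1$ via Dirichlet's formula to the statement $\sqrt{d}\,L(1,\chi_d)=2\log\varepsilon_0\asymp\log d$ and declare that ``the object of the proof is the effective reverse inequality $\sqrt{d}\,L(1,\chi_d)>2\log\varepsilon_0$.'' No such effective lower bound $L(1,\chi_d)\gg \log d/\sqrt{d}$ is available for real quadratic characters --- proving it effectively is essentially the Siegel-zero problem, which is exactly the obstruction mentioned in the introduction (Siegel's theorem gives finiteness but is ineffective). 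This is also a mischaracterization of Bir\'o's method: \cite{biro} and \cite{biroChowla} do \emph{not} bound $L(1,\chi_d)$ from below by Dedekind-sum expansions. They work at $s=0$. One introduces an auxiliary odd primitive character $\chi$ of conductor $q$ with $(q,2d)=1$, evaluates the twisted partial zeta function $\zeta_I(0,\chi)$ in two ways --- once by the explicit finite (Shintani-type) formula of Bir\'o--Granville (Theorem \ref{thm:biro-gr}), once by the factorization $\zeta_K(0,\chi)=L(0,\chi)L(0,\chi\chi_d)$, valid precisely when $h(d)=1$ --- and then reduces modulo a prime ideal $\Rf$ of $\Q(\chi)$ dividing $m_\chi=-qL(0,\chi)$. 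Since $L(0,\chi\chi_d)$ is an algebraic integer, the left side vanishes mod $\Rf$, and one obtains purely algebraic congruence conditions on $(a,n)$ modulo $qr$ (Lemma \ref{main} and (\ref{eq:congrB})). The contradiction comes from a finite computer sieve showing these congruences, together with the quadratic-residue conditions (\ref{eq:Leg_m1}), cannot all hold --- not from any analytic inequality.

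Your use of \cite{Lapkova} is also off: Theorem 1.1 there is not an estimate that ``bounds error terms from below'' but the divisibility criterion that $43\cdot 181\cdot 353\mid n$ forces $h(d)>1$. In the paper it plays the role of the final blow: the congruence sieve (Lemmas \ref{thm:BGL}--\ref{thm:BGL4}) forces $n\equiv 0\pmod{43\cdot 181\cdot 353}$ for all but finitely many residue classes of $a$, which contradicts that theorem; the finitely many exceptional values of $a$ are then killed by one-parameter sieves as in the Yokoi case. You correctly anticipated that uniformity in the second parameter $a$ is the new difficulty, but the resolution is not sharper analytic constants --- it is that the congruences (\ref{eq:congrB}) and (\ref{eq:Leg_m1}) depend only on $(a,n)$ modulo $qr$, so a two-dimensional finite check suffices. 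As written, your proposal cannot be completed past the first step.
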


\section{Notations and structure of the paper}

If $\chi$ is a Dirichlet character, then $L(s,\chi)$ denotes the usual Dirichlet $L$-function. If $d$ is a square-free positive integer and $d\equiv 1\pmod 4$, we denote by $\chi_d$  the real primitive Dirichlet character with conductor $d$, i.e. $\displaystyle\chi_d(m)=\Leg{m}{d}$ (Jacobi symbol).

$\Oc_K$ denotes the ring of integers of the quadratic field $K$. The norm $N\af$ of an integral ideal $\af$ in $\Oc_K$ is the index $[\Oc_K:\af]$. The Dedekind zeta function is defined as
\begin{equation}\zeta_K(s):=\sum_\af\frac{1}{(N\af)^s}
\end{equation}
where the summation is over all integral ideals $\af$ in $\Oc_K$. It is well-known (see e.g. Theorems 4.3 and 3.11 of \cite{wash}) that
\begin{equation}\label{eq:z20}\zeta_K(s)=\zeta(s)L(s,\chi_d)\,.
\end{equation}

 Throughout the paper by $(a,b)$ we denote the greatest common divisor of the integers $a$ and $b$ and $P^{+}(a)$ denotes the largest prime factor of $a$. As usual $\mu(x)$ means the M\"obius function. \\

 If $K$ is a real quadratic field, for $\beta\in K$ we denote its algebraic conjugate by $\overline{\beta}$. The element $\beta\in K$ is called \textit{totally positive}, denoted by $\beta\gg 0$, if $\beta>0$ and  $\bar{\beta}>0$. \\

The structure of the paper is the following: in \S\ref{sec:thm-b-g} we state the main result of \cite{biro-gr} on the evaluation of a partial zeta function in a general real quadratic field $K$, then we apply it for our special fields in \S\ref{sec:apply-biro}, and we derive there our main tool, Lemma \ref{main}. We simplify some quantities appearing in Lemma \ref{main} in \S\ref{sec:rem-formula}. We prove our main theorem in \S\ref{sec:proofThm}.\\

Computer calculations play an important role in the proof of the main theorem. These are SAGE (entry \cite{sage} from our bibliography) and C++ computations. The main number theoretic objects, characters, algebraic numbers and ideals in certain cyclotomic fields, are introduced in SAGE. The data obtained in SAGE we plug in programs (sieves) in C++ for speeding up the calculations, and most of the time we return to SAGE to finish our sieving with much less cases to consider and hence not bothering about the speed. The time for performing all possible computations was about $57$ hours, on an old personal laptop under Windows XP, with an AMD $64$x$2$ mobile processor at $1.6$ GHz speed, and $1$ GB RAM. All files can be found at \texttt{http://www.renyi.hu/$\sim$biroand/code/}(entry \cite{www} from the References of this paper) and more information about the implementation of the code is provided in the file \texttt{READ ME.txt} there.

\section{Bir\'o-Granville's Theorem}\label{sec:thm-b-g}
\hspace{0.5cm} In \cite{biro-gr} Bir\'o and Granville give a finite formula for a partial zeta function at $0$. They illustrate its efficiency with successful solving of the class number one problem for some one parameter R-D discriminants where $a=1$. Here we restate their main theorem.\\

Let $K$ be a real quadratic field with discriminant $d$, let $\chi$ be a Dirichlet character of conductor $q$ and let $I$ be an integral ideal of $K$. Define
$$\zeta_I(s,\chi):=\sum_\af\frac{\chi(N\af)}{(N\af)^s}
$$
where the summation is over all integral ideals $\af$ equivalent to $I$ in the ideal class group $Cl(d)$. For a quadratic form $f(x,y)\in\Z[x,y]$ introduce the sum
\begin{equation}\label{eq:G}
G(f,\chi):=\sum_{1\leq u,v\leq q-1}\chi\left(f(u,v)\right)\frac{u}{q}\frac{v}{q}\,.
\end{equation}
 According to the theory of cycles of reduced forms corresponding to a given ideal, see e.g. \S53 in \cite{hecke}, the ideal $I$ of $K$ has a $\Z$-basis $(\nu_1,\nu_2)$ for which $\nu_1\gg 0$ and $\alpha=\nu_2/\nu_1$ satisfies $0<\alpha<1$. Moreover, the regular continued fraction expansion of $\alpha$ is purely periodic:
$$\alpha=\left[0,\overline{a_1,\ldots,a_\ell}\right]$$
for some positive $\ell$ (which is the least period) and $a_1,\ldots,a_\ell$. Here $a_{j+\ell}=a_j$ for every $j\geq 1$. Further for $n\geq 1$ denote $$\frac{p_n}{q_n}=\left[0,a_1,\ldots,a_n\right]$$
and write $\alpha_n:=p_n-q_n\alpha$ with $\alpha_{-1}=1$ and $\alpha_0=-\alpha$. Define also for $j=1,2,\ldots$
$$Q_j(x,y)=\frac{1}{NI}\left(\nu_1\alpha_{j-1}x+\nu_1\alpha_jy\right)\left(\overline{\nu_1\alpha}_{j-1}x+\overline{\nu_1\alpha}_jy\right)$$
 and $$f_j(x,y)=(-1)^jQ_j(x,y).$$
It is known that every $f_j$ has integer coefficients. Using the usual notation
$$\tau(\chi):=\sum_{a(q)}\chi(a)e\left(\frac a q\right)$$
for the Gauss sum, introduce the expression
\begin{equation}\label{def:beta0}\beta_\chi:=\frac{1}{\pi^2}\chi(-1)\tau(\chi)^2L(2,\overline{\chi}^2).
\end{equation}
Also recall that a character $\chi$ is called \textit{odd} if $\chi(-1)=-1$. \\

In \cite{biro-gr} the following main result is proven
\begin{thm}[Bir\'o, Granville \cite{biro-gr}]\label{thm:biro-gr} Suppose that $\chi$ is an odd primitive character with conductor $q>1$ and $(q,2d)=1$. With the notations as above we have
$$\frac{1}{2}\zeta_I(0,\chi)=\sum_{j=1}^\ell G(f_j,\chi)+\frac{1}{2}\chi(d)\Leg{d}{q}\beta_\chi\sum_{j=1}^\ell a_j\overline{\chi}\left(f_j(1,0)\right).$$
\end{thm}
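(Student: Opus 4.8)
The plan is to turn $\zeta_I(0,\chi)$ into the sum of the $s=0$ special values of finitely many two-dimensional lattice (cone) zeta functions attached to the forms $f_1,\dots,f_\ell$, and then to evaluate these by a Hurwitz/Poisson transformation in each coordinate. First I would invoke the classical dictionary between ideals and the cycle of reduced forms recalled before the statement. The integral ideals $\af$ equivalent to $I$ are parametrised, modulo totally positive units, by the totally positive elements of a fixed fractional ideal, with $N\af$ a fixed rational multiple of the relative norm of the corresponding element. The fundamental domain for the unit action on these elements is a union of $\ell$ two-dimensional cones, the $j$-th cone being spanned by $\nu_1\alpha_{j-1}$ and $\nu_1\alpha_j$; on that cone the relevant norm is exactly $NI\cdot|Q_j(x,y)|$, so that $\chi(N\af)=\chi(f_j(x,y))$ once the alternating sign $(-1)^j$ is absorbed into $f_j$. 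This produces the geometric decomposition
\[
\zeta_I(s,\chi)=\sum_{j=1}^{\ell}\ \sum_{(x,y)\in C_j}\frac{\chi(f_j(x,y))}{f_j(x,y)^s},
\]
where $C_j$ is the set of lattice points in the $j$-th cone, with a half-open boundary convention chosen so that the cones tile the domain without overlap.

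Next, since each $f_j$ lies in $\Z[x,y]$ and $\chi$ is periodic mod $q$, I would split each cone sum into residue classes $(x,y)\equiv(u,v)\pmod q$, pulling out the constant $\chi(f_j(u,v))$ and leaving $\sum f_j(x,y)^{-s}$ over the subcone $x\equiv u,\ y\equiv v$. Writing $x=u+qm$, $y=v+qn$ turns each of these into a Barnes/Shintani-type double Dirichlet series in $(m,n)$, to be continued analytically to $s=0$. Here the hypothesis $(q,2d)=1$ is essential: it controls the splitting type of $q$ and lets $f_j$ factor modulo $q$ into two linear forms, a factorisation that will later separate the dual sum into a product.

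The heart of the argument, and the main obstacle, is the special value at $s=0$. I would apply Hurwitz's formula (equivalently Poisson summation) in each of the two variables. The \emph{polynomial} part, coming from $\zeta(0,a)=\tfrac12-a$ applied to the offsets $u/q,\ v/q$, reassembles — after weighting $\chi(f_j(u,v))$ by $\frac{u}{q}\frac{v}{q}$ and summing — into precisely $\sum_{j=1}^{\ell}G(f_j,\chi)$. The \emph{dual} part contributes, in each variable, a factor $\tau(\chi)$ from a Gauss sum $\sum_u\chi(u)e(ku/q)$ together with a frequency series; because $f_j$ splits into two linear forms mod $q$, the two variables dualise independently and the two frequency series combine into the single Dirichlet series $L(2,\overline{\chi}^2)$, the prefactor $\tfrac{1}{\pi^2}\chi(-1)$ arising from the $(2\pi i)^{-2}$ of the two Hurwitz expansions — that is, exactly $\beta_\chi$. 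The symbols $\chi(d)\Leg{d}{q}$ record the square root of $d$ and the splitting type entering the mod-$q$ factorisation, $\overline{\chi}(f_j(1,0))$ is the character at the value of the form on the edge of the cone, and the partial quotients $a_j$ count the lattice lines along that edge through the recursion $\alpha_{j+1}=a_{j+1}\alpha_j+\alpha_{j-1}$ relating consecutive cones. Summing the polynomial and dual parts over $j=1,\dots,\ell$ yields the stated identity.

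I expect the delicate points to be twofold. First, fixing the boundary conventions so that the $\ell$ cones tile without double counting and so that the edge contributions — the $a_j\,\overline{\chi}(f_j(1,0))$ terms — are assigned to the correct cone and counted once. Second, justifying that the analytic continuation to $s=0$ commutes with the finite residue decomposition and with the Poisson dualisation, i.e. controlling the double series well enough to pass to the limit termwise and to legitimately separate the polynomial and dual pieces. Once these analytic and combinatorial bookkeeping issues are settled, collecting the two contributions is routine.
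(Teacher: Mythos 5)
The paper you are working from does not actually prove this statement: Theorem \ref{thm:biro-gr} is imported verbatim from Bir\'o and Granville \cite{biro-gr} (``In \cite{biro-gr} the following main result is proven''), so there is no in-paper proof to compare yours against, and any attempt has to be measured against the original article. As a reconstruction of that argument your outline has the right overall shape: parametrise the ideals in the class by totally positive elements modulo units, tile the fundamental domain by the $\ell$ cones spanned by consecutive $\nu_1\alpha_{j-1},\nu_1\alpha_j$ coming from the continued fraction, reduce to Shintani-type cone zeta functions attached to the forms $f_j$, and evaluate at $s=0$ so that the total-degree-two Bernoulli data splits into a $B_1\times B_1$ piece (the $G(f_j,\chi)$ sums with their $\frac{u}{q}\frac{v}{q}$ weights) and a complementary piece carrying the partial quotients $a_j$ and the $L$-value. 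That is the standard skeleton for identities of this kind.

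However, the proposal contains one concrete error and otherwise defers exactly the steps that constitute the proof. The error: you obtain the second term from the assertion that $(q,2d)=1$ forces $f_j$ to ``factor modulo $q$ into two linear forms,'' so that the two variables dualise independently and the dual series factors into $\tau(\chi)^2L(2,\overline{\chi}^2)$. But the discriminant of each $f_j$ is $d$, so $f_j$ splits into linear forms modulo a prime $p\mid q$ only when $\Leg{d}{p}=1$; the hypothesis $(q,2d)=1$ guarantees nothing of the sort, and the theorem is asserted (and used repeatedly later in the paper) also when the Jacobi symbol $\Leg{d}{q}$ equals $-1$, in which case no such factorisation exists yet the second term is still present, with a sign. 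The factor $\chi(d)\Leg{d}{q}$ must come out of a genuine quadratic Gauss-sum evaluation after completing the square in $f_j$ modulo $q$, not from a splitting hypothesis, so your mechanism for separating the dual sum into a product collapses precisely in the inert case. Beyond that, the two issues you yourself flag as ``delicate'' --- assigning the edge contributions so that exactly $a_j\overline{\chi}\left(f_j(1,0)\right)$ appears with the coefficient $\frac{1}{2}\chi(d)\Leg{d}{q}\beta_\chi$, and justifying that the analytic continuation to $s=0$ commutes with the residue decomposition and the polynomial/dual separation --- are not bookkeeping: they are the substance of the Bir\'o--Granville paper, and no argument is offered for either. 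As it stands the proposal is a plausible plan of attack, not a proof.
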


\section{Application of Theorem \ref{thm:biro-gr} for Our Special Discriminant}\label{sec:apply-biro}
\hspace{0.5cm} Let $\,d=(an)^2+4a$ be square-free with odd positive integers $a$ and $n$ and assume that $a>1$. We use that $d\equiv 1\pmod 4$, so the ring of integers $\Oc_K$ of the field $K=\Q(\sqrt{d})$ is of the type $\displaystyle\Oc_K=\Z\left[1,(\sqrt{d}+1)/2\right]$.
Introduce
$$\displaystyle\alpha=\frac{\sqrt{d}-an}{2}\,.$$
We have $0<\alpha<1$ and we take the ideal $I=\Z[1,\alpha]$. Clearly $I=\Oc_K$ and we apply Theorem \ref{thm:biro-gr} to compute the partial zeta function for the class of principal ideals.\\

However to apply the upper formula for the function $\zeta_I$ we need the continued fraction expansion of $\alpha$. It can be checked by some computations, e.g. using \cite{schinzel} and the rules on page 78 from \cite{beck}, that
\begin{equation}\label{eq:beta/2}
\alpha=[0,\,\overline{n,an}\,].
\end{equation}
Using the notation from \S\ref{sec:thm-b-g} we have $\ell=2$, since we consider $a>1$,
and
\begin{equation}\label{eq:zeta-general}
\frac{1}{2}\zeta_I(0,\chi)=\sum_{j=1}^2 G(f_j,\chi)+\frac{1}{2}\chi(d)\Leg{d}{q}\beta_\chi\sum_{j=1}^2 a_j\overline{\chi}(f_j(1,0)).
\end{equation}
Here $p_1/q_1=[0;n]=1/n$, $p_2/q_2=1/(n+1/an)=an/(an^2+1)$ and $\alpha_1=1-n\alpha$, $\alpha_2=an-(an^2+1)\alpha$.\\

By the choice of the ideal $I=\Oc_K$ we have that $NI=1$ and $\nu_1=1$ and so
\begin{equation}\label{eq:Qj}
Q_j(x,y)=\alpha_{j-1}\overline{\alpha}_{j-1}x^2+(\alpha_{j-1}\overline{\alpha}_j+\alpha_j\overline{\alpha}_{j-1})xy+\alpha_{j}\overline{\alpha}_{j}y^2\,.
\end{equation}

Observe that $\alpha$ is the positive root of the equation $x^2+(an)x-a=0$. Then $\alpha+\bar\alpha=-an$ and $\alpha\bar\alpha=-a$. We use these to compute
\begin{eqnarray*}\label{eq:Q1}Q_1(x,y)& = &\alpha_0\bar\alpha_0 x^2+(\alpha_0\bar\alpha_1+\alpha_1\bar\alpha_0)xy+\alpha_1\bar\alpha_1 y^2\\
& = &\alpha\bar\alpha x^2+\left(-\alpha(1-n\bar\alpha)-\bar\alpha(1-n\alpha)\right)xy + (1-n\alpha)(1-n\bar\alpha)y^2\\
& = &-ax^2-anxy+y^2.
\end{eqnarray*}
Similarly
 \begin{eqnarray*}\label{eq:Q2}
  Q_2(x,y)&=&\alpha_1\bar\alpha_1 x^2+(\alpha_1\bar\alpha_2+\alpha_2\bar\alpha_1)xy+\alpha_2\bar\alpha_2 y^2  \\
  &=& (1-n\alpha)(1-n\bar\alpha)x^2\\
  & &+\left\{(1-n\alpha)\left(an-(an^2+1)\bar\alpha\right)
+(1-n\bar\alpha)\left(an-(an^2+1)\alpha\right)\right\}xy\\
  & & + \left(an-(an^2+1)\alpha\right)\left(an-(an^2+1)\bar\alpha\right)y^2 \\
 & = & x^2+anxy-ay^2.
 \end{eqnarray*}
 So
 \begin{equation}\label{eq:f1_2}
 f_1(x,y)=ax^2+anxy-y^2
 \end{equation}
and
 \begin{equation}\label{eq:f2_2}
 f_2(x,y)=x^2+anxy-ay^2\,.
\end{equation}
We see that $f_1(1,0)=a$ and $f_2(1,0)=1$. Introduce
\begin{equation}\label{def:c}c_a:=a+\overline{\chi}(a)\,.
\end{equation}
When we substitute in (\ref{eq:zeta-general}) we get
\begin{equation}\label{eq:zeta-1}
\frac{1}{2}\zeta_I(0,\chi)=G(f_1,\chi)+G(f_2,\chi)+\frac{n}{2}\chi(d)\Leg{d}{q}\beta_\chi c_a\,.
\end{equation}

Now assume that we are in a field $K$ where $h(d)=1$. Then all integral ideals are principal. So
\begin{equation}\label{eq:z-z}\zeta_I(s,\chi)=\sum_{\af\triangleleft\Oc_K}\frac{\chi(N\af)}{(N\af)^s}=:\zeta_K(s,\chi)\,.
\end{equation}
It follows easily from (\ref{eq:z20}) that
\begin{equation}\label{eq:z2}\zeta_K(s,\chi)=L(s,\chi)L(s,\chi\chi_d)\,.
\end{equation}

Recall (see e.g. Theorem 4.2 of \cite{wash}) the following
equation for an odd primitive character $\chi$:
\begin{equation}\label{eq:L0}L(0,\chi)=-\sum_{1\leq a\leq q}\chi(a)\frac{a}{q}\,.
\end{equation}

Let us further denote
\begin{equation}\label{def:m}
 m_\chi:=\sum_{1\leq a<q}a\chi(a)=-qL(0,\chi)\,.
 \end{equation}

Then from (\ref{eq:z-z}) and (\ref{eq:z2}) we have
$$q\zeta_I(0,\chi)=qL(0,\chi)L(0,\chi\chi_d)=-m_\chi L(0,\chi\chi_d)\,.$$
Combining the latter equality with (\ref{eq:zeta-1}) we get
\begin{equation}\label{eq:A1}-\frac{1}{2}m_\chi L(0,\chi\chi_d)=q\biggl(G(f_1,\chi)+G(f_2,\chi)+\frac{n}{2}\chi(d)\Leg{d}{q}\beta_\chi c_a)\biggr)\,.\end{equation}

Introduce the notation
\begin{equation}\label{def:c-a-n}C_\chi(a,n):=q\biggl(G(f_1,\chi)+G(f_2,\chi)\biggr)\,.
\end{equation}
Then (\ref{eq:A1}) transforms into
\begin{lem}\label{eq:A-C-n}With the upper notations, if $h(d)=1$, we have
$$ -m_\chi L(0,\chi\chi_d)=2C_\chi(a,n)+nq\chi(d)\Leg{d}{q}\beta_\chi c_a\,.$$
\end{lem}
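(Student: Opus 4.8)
The plan is to assemble the claimed identity directly from the ingredients already collected in equations (\ref{eq:zeta-1}) through (\ref{def:c-a-n}); this is essentially a bookkeeping argument, so I would proceed by carefully substituting and relabeling rather than by introducing any new analytic input. The starting point is the Bir\'o--Granville formula specialized to our field, namely (\ref{eq:zeta-1}),
$$\frac{1}{2}\zeta_I(0,\chi)=G(f_1,\chi)+G(f_2,\chi)+\frac{n}{2}\chi(d)\Leg{d}{q}\beta_\chi c_a\,.$$
The key observation needed to make this usable is that we are working under the hypothesis $h(d)=1$, so that every integral ideal is principal and $\zeta_I(s,\chi)$ coincides with the full $\zeta_K(s,\chi)$ as recorded in (\ref{eq:z-z}).

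First I would use the factorization (\ref{eq:z2}) of $\zeta_K(s,\chi)$ as $L(s,\chi)L(s,\chi\chi_d)$, which follows from (\ref{eq:z20}) because the twist of $\zeta_K$ by $\chi\circ N$ splits along the two Dirichlet $L$-functions. Evaluating at $s=0$ and multiplying by $q$ gives
$$q\,\zeta_I(0,\chi)=q\,L(0,\chi)\,L(0,\chi\chi_d)\,.$$
Next I would invoke the explicit value (\ref{eq:L0}) of $L(0,\chi)$ together with the definition (\ref{def:m}) of $m_\chi$, which together yield $q\,L(0,\chi)=-m_\chi$. Substituting this produces
$$q\,\zeta_I(0,\chi)=-m_\chi\,L(0,\chi\chi_d)\,,$$
which is exactly the left-hand side $-m_\chi L(0,\chi\chi_d)$ after we account for the factor of $2$ below.

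Finally I would multiply the specialized formula (\ref{eq:zeta-1}) through by $2q$, obtaining
$$q\,\zeta_I(0,\chi)=2q\bigl(G(f_1,\chi)+G(f_2,\chi)\bigr)+nq\,\chi(d)\Leg{d}{q}\beta_\chi c_a\,,$$
and then replace the combination $q\bigl(G(f_1,\chi)+G(f_2,\chi)\bigr)$ by its abbreviation $C_\chi(a,n)$ from (\ref{def:c-a-n}). Equating the two expressions for $q\,\zeta_I(0,\chi)$ gives
$$-m_\chi\,L(0,\chi\chi_d)=2C_\chi(a,n)+nq\,\chi(d)\Leg{d}{q}\beta_\chi c_a\,,$$
which is precisely the asserted identity.

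The argument is a chain of substitutions, so there is no single hard analytic step; the only real obstacle is purely clerical, namely tracking the powers of $q$ and the factors of $\tfrac12$ consistently so that the factor of $2$ in front of $C_\chi(a,n)$ and the factor of $n$ in the $\beta_\chi$ term come out correctly. I would double-check the bookkeeping by confirming that the constant multiplying $\beta_\chi c_a$ is $nq$: the $\tfrac12$ in (\ref{eq:zeta-1}) is cancelled against the $2q$ used in clearing denominators, leaving the single factor $n$ from (\ref{eq:zeta-1}) times the $q$ from the multiplication. All the deeper content — the finite formula for the partial zeta function, the continued fraction expansion (\ref{eq:beta/2}), and the explicit forms (\ref{eq:f1_2}) and (\ref{eq:f2_2}) of $f_1$ and $f_2$ — has already been established, so the lemma follows immediately once these pieces are combined.
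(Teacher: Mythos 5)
Your proposal is correct and follows essentially the same route as the paper: both use $h(d)=1$ to identify $\zeta_I$ with $\zeta_K$, factor it via (\ref{eq:z2}), replace $qL(0,\chi)$ by $-m_\chi$ from (\ref{def:m}), and equate with (\ref{eq:zeta-1}) cleared of denominators, with the abbreviation (\ref{def:c-a-n}). The bookkeeping of the factors $2$, $q$ and $n$ checks out.
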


Let $\Lf_\chi$ be the field formed by adjoining to $\Q$ all the
values of the character $\chi$ and $\Oc_{\Lf_\chi}$ be its ring of
integers. Note that $d\equiv 1\pmod 4$, so
$\displaystyle\Leg{-1}{d}=(-1)^{(d-1)/2}=1$ and $\chi_d$ is an
even character. Then we can state
\begin{cla}\label{factA} For the odd character $\chi$ with conductor $q$ and $d\equiv 1\pmod 4$ such that $(q,d)=1$ the quantity $L(0,\chi\chi_d)$ is an algebraic integer in the number field $\Lf_\chi$.
\end{cla}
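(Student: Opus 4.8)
The plan is to identify $L(0,\chi\chi_d)$ with a generalized Bernoulli number and then obtain the integrality by a change-of-variable argument in the defining sum. First I would record the basic properties of the character $\psi:=\chi\chi_d$. Since $\chi$ is odd primitive of conductor $q>1$, $\chi_d$ is even primitive of conductor $d>1$, and $(q,d)=1$, the product $\psi$ is a primitive character of conductor $qd$ with $\psi(-1)=\chi(-1)\chi_d(-1)=-1$, so $\psi$ is odd. Because $\chi_d$ is real (its values lie in $\{0,\pm1\}\subset\Q$), every value $\psi(a)=\chi(a)\chi_d(a)$ lies in $\Lf_\chi$; hence all quantities below stay inside $\Lf_\chi$. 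Applying the formula \eqref{eq:L0}, valid for any odd primitive character, to $\psi$ with conductor $qd$ gives
\[
L(0,\psi)=-\frac{1}{qd}\sum_{1\le a\le qd}a\,\psi(a),
\]
the sum running effectively over $a$ coprime to $qd$. This already shows $L(0,\chi\chi_d)\in\Lf_\chi$, so the entire content of the Claim is \emph{integrality} at each prime $\mathfrak l$ of $\Oc_{\Lf_\chi}$.

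The engine for integrality is an elementary relation: for any integer $b$ with $(b,qd)=1$, reindexing the sum $S=\sum_a a\,\psi(a)$ by $a\mapsto ba\bmod qd$ and separating the integer parts $\lfloor ba/qd\rfloor$ yields
\[
(1-b\,\psi(b))\,L(0,\psi)=\psi(b)\sum_{a}\psi(a)\left\lfloor\frac{ba}{qd}\right\rfloor .
\]
The right-hand side is a $\Z$-linear combination of values of $\psi$, hence lies in $\Oc_{\Lf_\chi}$. Thus for each prime $\mathfrak l\mid\ell$ of $\Oc_{\Lf_\chi}$ it suffices to exhibit one $b$, coprime to $qd$, for which $1-b\,\psi(b)$ is an $\mathfrak l$-unit; then $L(0,\psi)$ is $\mathfrak l$-integral.

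For $\ell\nmid qd$ this is immediate: fix $b\equiv1\pmod{qd}$, so $\psi(b)=1$, and use $(\ell,qd)=1$ to adjust $b\bmod\ell$ so that $b\not\equiv1\pmod{\mathfrak l}$, possible since $\mathbb F_\ell$ has at least two elements. The decisive step — and the one I expect to be the main obstacle — is the primes $\ell\mid qd$, where $b\bmod\ell$ is no longer independent of $\psi(b)$. Here I would exploit that $qd$ is \emph{not a prime power}: since $q>1$ and $d>1$ are coprime, $qd$ has a prime factor $p\neq\ell$, and moreover $\psi$ has even order (the factor $\chi_d$ contributes order $2$, coprime to the rest), so $-1$ lies in the image of $\psi$. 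Using the freedom modulo $p$ one selects $b$ in a residue class modulo $qd$ on which $\psi$ takes a value not congruent to $1$ modulo $\mathfrak l$ while keeping $b\,\psi(b)\not\equiv1$; for $\ell$ odd the value $\psi(b)=-1$ reduces to $-1\neq1$ and does the job after a suitable choice of $b$ modulo $p$. In the setting actually used in the paper one has $q$ odd, so $2\nmid qd$ and only odd primes $\ell$ occur, and this constructive argument settles every case; assembling the local conclusions over all $\mathfrak l$ gives $L(0,\chi\chi_d)\in\Oc_{\Lf_\chi}$. For the statement in its stated generality (allowing $q$ even) one may alternatively invoke the classical integrality theorem for generalized Bernoulli numbers of non-prime-power conductor, which is precisely the present situation $\mathrm{cond}(\psi)=qd$ with two distinct prime divisors.
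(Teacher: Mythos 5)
Your twisting identity $(1-b\,\psi(b))L(0,\psi)=\psi(b)\sum_a\psi(a)\lfloor ba/(qd)\rfloor$ is correct, and the case $\ell\nmid qd$ is handled properly. But the case $\ell\mid qd$ --- which, as you say, is the entire content of the claim, since $-qd\,L(0,\psi)=\sum_a a\,\psi(a)$ is visibly in $\Oc_{\Lf_\chi}$ and only primes dividing $qd$ can occur in a denominator --- is not actually proved. The obstruction is that $b\bmod\ell$ and $\psi(b)$ cannot be prescribed independently: the set $\{b:\psi(b)=-1\}$ is a single coset of $\ker\psi$, and ``using the freedom modulo $p$'' changes the $p$-component of $\psi(b)$ as well (every local component of $\psi$ is nontrivial by primitivity), so nothing guarantees that this coset contains a $b$ with $b\not\equiv-1\pmod\ell$. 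Even the standard choice $b\equiv1\pmod{\ell^{e}}$ (with $\ell^{e}$ the exact power of $\ell$ in $qd$), which reduces matters to finding $b$ with $\psi_M(b)\not\equiv1\pmod{\mathfrak{l}}$ for the prime-to-$\ell$ component $\psi_M$, genuinely fails when $\psi_M$ has $\ell$-power order: then every nontrivial value of $\psi_M$ is an $\ell$-power root of unity $\zeta\neq1$, and $1-\zeta$ lies in \emph{every} prime above $\ell$. So the constructive argument breaks exactly where the work is, and the closing appeal to ``the classical integrality theorem for generalized Bernoulli numbers of non-prime-power conductor'' is a citation of the statement to be proved (Carlitz's theorem in this case), not a proof of it.

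The argument the paper intends (following the statement above Fact A of \cite{biro}) needs no case analysis. Apply (\ref{eq:L0}) to the odd primitive character $\psi=\chi\chi_d$ of conductor $qd$ and split the summation variable by the Chinese Remainder Theorem: writing $a\equiv a_1\pmod q$ and $a\equiv a_2\pmod d$, one has $a=a_1d\bar d+a_2q\bar q-k\,qd$ with $d\bar d\equiv1\pmod q$, $q\bar q\equiv1\pmod d$ and $k=k(a_1,a_2)\in\Z$, whence
\[
\sum_{a=1}^{qd}a\,\psi(a)=d\bar d\Bigl(\sum_{a_1}a_1\chi(a_1)\Bigr)\Bigl(\sum_{a_2}\chi_d(a_2)\Bigr)+q\bar q\Bigl(\sum_{a_2}a_2\chi_d(a_2)\Bigr)\Bigl(\sum_{a_1}\chi(a_1)\Bigr)-qd\sum_{a_1,a_2}k\,\chi(a_1)\chi_d(a_2).
\]
The first two terms vanish because $\chi$ and $\chi_d$ are nontrivial (this is where $q>1$, $d>1$ and $(q,d)=1$ enter), so $\sum_a a\,\psi(a)$ is $qd$ times an element of $\Oc_{\Lf_\chi}$ and $L(0,\chi\chi_d)\in\Oc_{\Lf_\chi}$ for every prime at once. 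You should either carry out this computation or genuinely repair the $\ell\mid qd$ case of your approach.
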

This can be shown in the same way as the corresponding statement
above Fact A of \cite{biro}, using formula (\ref{eq:L0}) for the odd primitive
character $\chi\chi_d$ and the fact that $q$ and $d$ are coprime.

Take a prime ideal $\Rf$ in $\Oc_{\Lf_\chi}$ such that $m_\chi\in\Rf$. By Claim \ref{factA}
we have $L(0,\chi\chi_d)\in\Oc_{\Lf_\chi}$ so $-m_\chi
L(0,\chi\chi_d)\equiv 0\pmod{\Rf}$. Then by Lemma \ref{eq:A-C-n}
we get the main result of this section:

\begin{lem}\label{main}Let $\,d=(an)^2+4a$ be square-free with odd positive integers $a$ and $n$ and assume that $a>1$ and $h(d)=1$. Suppose that $\chi$ is an odd primitive character with conductor $q>1$ and $(q,2d)=1$. Take a prime ideal $\Rf$ in $\Oc_{\Lf_\chi}$ such that $m_\chi\in\Rf$. Then we have
\begin{equation}\label{eq:2A-0}0\equiv 2C_\chi(a,n)+n\chi(d)\Leg{d}{q}q\beta_\chi c_a\pmod{\Rf}\,.
\end{equation}
with the notations (\ref{eq:G}), (\ref{eq:f1_2}), (\ref{eq:f2_2}), (\ref{def:c-a-n}), (\ref{def:beta0}),
(\ref{def:c}).

\end{lem}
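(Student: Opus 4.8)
The plan is to deduce the congruence directly from the two results established just above the statement, namely the exact identity of Lemma~\ref{eq:A-C-n} and the integrality assertion of Claim~\ref{factA}; no further analytic input is required, so the argument is essentially a one-line consequence of the definition of reduction modulo the ideal $\Rf$.

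First I would verify that the hypotheses of Claim~\ref{factA} are in force under the assumptions of Lemma~\ref{main}: $\chi$ is odd with conductor $q$, we have $(q,d)=1$ since $(q,2d)=1$, and $d=(an)^2+4a\equiv 1\pmod 4$ because $an$ is odd. Claim~\ref{factA} therefore yields $L(0,\chi\chi_d)\in\Oc_{\Lf_\chi}$.

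Next, by hypothesis $\Rf$ is an ideal of $\Oc_{\Lf_\chi}$ with $m_\chi\in\Rf$. Since ideals absorb multiplication by ring elements and $L(0,\chi\chi_d)\in\Oc_{\Lf_\chi}$, we obtain $m_\chi L(0,\chi\chi_d)\in\Rf$, i.e.
$$-m_\chi L(0,\chi\chi_d)\equiv 0\pmod{\Rf}.$$
Finally I would apply Lemma~\ref{eq:A-C-n}, valid because $h(d)=1$, to substitute $2C_\chi(a,n)+nq\chi(d)\Leg{d}{q}\beta_\chi c_a$ for the left-hand side; after reordering the scalar factors this is precisely the claimed congruence (\ref{eq:2A-0}).

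The argument has no genuine obstacle, as all the substantive work---the finite evaluation of the partial zeta function and the passage to an algebraic integer---has already been carried out in Theorem~\ref{thm:biro-gr}, Lemma~\ref{eq:A-C-n}, and Claim~\ref{factA}. The only subtlety is the absorption step, which uses merely that $\Rf$ is an ideal; primality of $\Rf$ plays no role in the congruence itself but will be needed later, when one reduces (\ref{eq:2A-0}) modulo $\Rf$ into the residue field $\Oc_{\Lf_\chi}/\Rf$ to extract arithmetic constraints.
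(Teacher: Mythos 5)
Your proposal is correct and follows exactly the paper's own argument: Claim~\ref{factA} gives $L(0,\chi\chi_d)\in\Oc_{\Lf_\chi}$, the ideal property of $\Rf$ together with $m_\chi\in\Rf$ gives $-m_\chi L(0,\chi\chi_d)\equiv 0\pmod{\Rf}$, and Lemma~\ref{eq:A-C-n} converts this into (\ref{eq:2A-0}). Your added observations (that $d\equiv 1\pmod 4$ since $an$ is odd, and that primality of $\Rf$ is not needed here) are accurate refinements of the same proof.
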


\section{Further Remarks on Lemma \ref{main}}\label{sec:rem-formula}
\hspace{0.5cm}First we find a more simple finite form for
$\beta_\chi$. Let
\begin{equation}\label{eq:gamma}\gamma_\chi:=\sum_{n=1}^{q-1}\chi^2(n)\frac{n^2}{q^2}
\end{equation}
and consider the Jacobi sum $$\displaystyle
J_\chi:=\sum_{\substack{a,b\pmod q\\a+b\equiv 1\pmod
q}}\chi(a)\chi(b)\,.$$ The following claim shows that $\beta_\chi$
is actually not only an algebraic number but also computable in
finitely many steps which is not at all evident from definition
(\ref{def:beta0}). The claim is stated in the Introduction of \cite{biro-gr} and it is proven in \S 6 of that paper.
 \begin{lem}\label{def:beta} Let $\chi$ be a primitive character of order greater than $2$. For the unique way to write $\chi =\chi_{+}\chi_{-}$ where
$\chi_{+},\,\chi_{-}$ are primitive characters of coprime
conductors $q_{+},\,q_{-}$ respectively, such that $\chi_-$ has
order $2$, and $\chi_{+}^2$ is also primitive, we have
$$\beta_{\chi}=\chi_{+}(-1)J_{\chi_{+}}\gamma_{\chi}\mu (q_{-})\prod_{p\mid q_{-}}\frac{p^2\chi_{+}^2(p)-1}{p\chi_{+}^2(p)-1}\,.$$
\end{lem}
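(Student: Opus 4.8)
The plan is to unwind the definition $\beta_\chi=\frac{1}{\pi^2}\chi(-1)\tau(\chi)^2L(2,\overline{\chi}^2)$ and to evaluate $L(2,\overline{\chi}^2)$ in closed form. The engine is the classical Fourier expansion
\[\sum_{m=1}^\infty\frac{\cos(2\pi mx)}{m^2}=\pi^2\left(x^2-x+\tfrac16\right),\qquad 0\le x\le 1,\]
that is, $\pi^2B_2(x)$ with $B_2$ the second Bernoulli polynomial. First I would treat an even primitive character $\eta$ of conductor $f$: writing $\overline{\eta}(m)\tau(\eta)=\sum_{a\bmod f}\eta(a)e(am/f)$ (valid for every $m$ by primitivity) and inserting this into $L(2,\overline{\eta})=\sum_m\overline{\eta}(m)m^{-2}$, the evenness of $\eta$ kills the sine contribution, while the constant and linear terms of $B_2$ drop out because $\sum_a\eta(a)=0$ and (again by evenness) $\sum_a\eta(a)a=0$ for nonprincipal $\eta$. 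The Fourier formula then yields
\[L(2,\overline{\eta})=\frac{\pi^2}{\tau(\eta)}\sum_{a=1}^{f-1}\eta(a)B_2\!\left(\tfrac af\right)=\frac{\pi^2 B_{2,\eta}}{f\,\tau(\eta)}.\]

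The character $\overline{\chi}^2$ of modulus $q$ is \emph{not} primitive: since $\chi=\chi_{+}\chi_{-}$ with $\chi_{-}$ quadratic, $\chi^2$ agrees with $\chi_{+}^2$ away from the primes dividing $q_{-}$, so $\overline{\chi}^2$ is induced by the primitive even character $\overline{\chi_{+}}^2$ of conductor $q_{+}$. Next I would peel off the missing Euler factors,
\[L(2,\overline{\chi}^2)=L(2,\overline{\chi_{+}}^2)\prod_{p\mid q_{-}}\bigl(1-\overline{\chi_{+}}^2(p)p^{-2}\bigr),\]
apply the primitive formula to $\overline{\chi_{+}}^2$, and convert $B_{2,\chi_{+}^2}$ back to $\gamma_\chi$. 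The link is $B_{2,\chi^2}=q\gamma_\chi$ together with the standard imprimitivity relation $B_{2,\chi^2}=B_{2,\chi_{+}^2}\prod_{p\mid q_{-}}(1-\chi_{+}^2(p)p)$, which expresses $B_{2,\chi_{+}^2}$ through $\gamma_\chi$. (One checks $\chi^2$ is nonprincipal since $\chi$ has order $>2$, so these Bernoulli identities apply.)

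It remains to handle $\chi(-1)\tau(\chi)^2$. Using multiplicativity of Gauss sums for coprime conductors, $\tau(\chi)=\chi_{+}(q_{-})\chi_{-}(q_{+})\tau(\chi_{+})\tau(\chi_{-})$, the quadratic evaluation $\tau(\chi_{-})^2=\chi_{-}(-1)q_{-}$, and the Jacobi-sum identity $\tau(\chi_{+})^2=J_{\chi_{+}}\tau(\chi_{+}^2)$ (valid because $\chi_{+}^2$ is primitive and nonprincipal, so $J_{\chi_{+}}=\tau(\chi_{+})^2/\tau(\chi_{+}^2)$), I would obtain $\tau(\chi)^2/\tau(\chi_{+}^2)=\chi_{+}^2(q_{-})\chi_{-}(-1)q_{-}J_{\chi_{+}}$, with $\tau(\chi_{+}^2)$ cancelling exactly against the denominator from the primitive $L$-value. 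The factor $\chi(-1)\chi_{-}(-1)$ collapses to $\chi_{+}(-1)$. Assembling the pieces gives
\[\beta_\chi=\chi_{+}(-1)J_{\chi_{+}}\gamma_\chi\cdot\chi_{+}^2(q_{-})\,q_{-}^2\prod_{p\mid q_{-}}\frac{1-\overline{\chi_{+}}^2(p)p^{-2}}{1-\chi_{+}^2(p)p}.\]

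Finally, the main obstacle is the clean simplification of this last product into $\mu(q_{-})\prod_{p\mid q_{-}}\frac{p^2\chi_{+}^2(p)-1}{p\chi_{+}^2(p)-1}$. Here I would use that the hypothesis $(q,2d)=1$ forces $q$, and hence $q_{-}$, to be odd; since $\chi_{-}$ is a real primitive character, $q_{-}$ is therefore squarefree, so $\mu(q_{-})=(-1)^{\omega(q_{-})}$ and $\chi_{+}^2(q_{-})q_{-}^2=\prod_{p\mid q_{-}}\chi_{+}^2(p)p^2$. Distributing this product over the Euler factors and using $\chi_{+}^2(p)\overline{\chi_{+}}^2(p)=1$ (as $p\nmid q_{+}$ makes $\chi_{+}^2(p)$ a root of unity), each local factor becomes $\chi_{+}^2(p)p^2(1-\overline{\chi_{+}}^2(p)p^{-2})/(1-\chi_{+}^2(p)p)=-(p^2\chi_{+}^2(p)-1)/(p\chi_{+}^2(p)-1)$, and the accumulated signs produce exactly $\mu(q_{-})$. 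The delicate points throughout are keeping the two different families of Euler factors (the $p^{-2}$ ones from the $L$-function and the $p$ ones from the Bernoulli relation) separate, and tracking the signs $\chi_{-}(-1)$ and $\chi_{+}(-1)$ so that the $q_{-}^2$ and $\tau(\chi_{+}^2)$ factors cancel correctly.
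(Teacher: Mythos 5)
Your proof is correct. There is nothing in the paper to compare it against line by line: the paper does not prove this lemma at all, but merely records that the statement appears in the Introduction of \cite{biro-gr} and is proved in \S 6 of that paper. Your derivation therefore supplies the argument the paper outsources, and the route you take --- separability of the Gauss sum for a primitive character, the Fourier expansion $\sum_{m\ge 1}\cos(2\pi mx)/m^{2}=\pi^{2}B_{2}(x)$ to evaluate $L(2,\overline{\eta})$ for even primitive $\eta$, removal of the Euler factors at $p\mid q_{-}$, the imprimitivity relation for $B_{2,\chi^{2}}$, and the identities $\tau(\chi)=\chi_{+}(q_{-})\chi_{-}(q_{+})\tau(\chi_{+})\tau(\chi_{-})$, $\tau(\chi_{-})^{2}=\chi_{-}(-1)q_{-}$, $\tau(\chi_{+})^{2}=J_{\chi_{+}}\tau(\chi_{+}^{2})$ --- is the standard one; I checked the bookkeeping, including the intermediate expression with $\chi_{+}^{2}(q_{-})q_{-}^{2}$ and the final sign count producing $\mu(q_{-})$. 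Two small points are worth making explicit. First, the nonprincipality of $\chi_{+}^{2}$, which you need both for $J_{\chi_{+}}=\tau(\chi_{+})^{2}/\tau(\chi_{+}^{2})$ and for the vanishing of $\sum_{a}\eta(a)$ and $\sum_{a}\eta(a)a$, follows in one line: since $\chi$ has order greater than $2$ and $\chi_{-}$ has order at most $2$, $\chi_{+}$ has order greater than $2$, and a primitive principal character would force $q_{+}=1$. Second, the lemma as stated carries no hypothesis forcing $q_{-}$ to be squarefree, yet your last step needs it (otherwise $\mu(q_{-})=0$ while the left-hand side need not vanish); you correctly import the oddness of $q$ from the surrounding context $(q,2d)=1$, under which the conductor of the real primitive $\chi_{-}$ is automatically squarefree, and this is indeed the only setting in which the paper ever invokes the lemma, but strictly the statement should be read with that standing assumption.
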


The following statement is proved in \S9 of \cite{biro-gr}. As the
exposition in \cite{biro-gr} is somewhat sketchy we give here a
detailed proof.
\begin{lem}\label{cla:G1=G2}For odd complex character $\chi$ with conductor $q>2$ such that $(q,2d)=1$ we have $$G(f_1,\chi)=G(f_2,\chi)\,.$$
\end{lem}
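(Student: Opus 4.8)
The plan is to reduce the identity to a vanishing statement about a single character sum, using one algebraic identity relating the two forms together with two elementary symmetry substitutions in the double sum (\ref{eq:G}). The starting observation is the identity
\[
f_2(x,y)=-f_1(y,-x),
\]
which I would verify directly from (\ref{eq:f1_2}) and (\ref{eq:f2_2}): one has $f_1(y,-x)=ay^2-anxy-x^2$, whose negative is exactly $f_2(x,y)=x^2+anxy-ay^2$.

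Substituting this into $G(f_2,\chi)$ and using that $\chi$ is odd, so that $\chi\big(f_2(u,v)\big)=\chi(-1)\chi\big(f_1(v,-u)\big)=-\chi\big(f_1(v,-u)\big)$, I would then reindex by the substitution $(u,v)\mapsto(v,q-u)$, which is a bijection of $\{1,\dots,q-1\}^2$ onto itself. Under it the argument of $\chi$ becomes $f_1$ evaluated at the new pair, while the weight $u/q$ turns into $1-(v/q)$ and $v/q$ becomes the relabelled $u/q$. Expanding the factor $1-(v/q)$ produces two sums, one of which is precisely $G(f_1,\chi)$, and the plan is to arrive at
\[
G(f_2,\chi)=G(f_1,\chi)-S,\qquad S:=\sum_{1\le u,v\le q-1}\chi\big(f_1(u,v)\big)\,\frac{u}{q}.
\]
Thus the whole statement reduces to showing $S=0$.

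To handle $S$ I would first symmetrize it. Since $f_1$ is a homogeneous quadratic form, $f_1(q-u,q-v)\equiv f_1(u,v)\pmod q$, so the substitution $(u,v)\mapsto(q-u,q-v)$ (again a bijection of the index set) replaces the weight $u/q$ by $1-(u/q)$ and yields $S=\Sigma-S$, i.e. $S=\tfrac12\Sigma$ with $\Sigma:=\sum_{1\le u,v\le q-1}\chi\big(f_1(u,v)\big)$. It then suffices to prove $\Sigma=0$, and this is exactly where the hypothesis that $\chi$ is complex (so that $\chi^2$ is non-principal) enters. For any $\lambda$ coprime to $q$, multiplication by $\lambda$ permutes the nonzero residues, so $(u,v)\mapsto(\lambda u,\lambda v)$ is a bijection of $\{1,\dots,q-1\}^2$; homogeneity gives $f_1(\lambda u,\lambda v)=\lambda^2 f_1(u,v)$ and hence $\chi\big(f_1(\lambda u,\lambda v)\big)=\chi^2(\lambda)\,\chi\big(f_1(u,v)\big)$. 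Reindexing shows $\Sigma=\chi^2(\lambda)\,\Sigma$, and choosing $\lambda$ with $\chi^2(\lambda)\ne1$ forces $\Sigma=0$, which completes the chain $G(f_2,\chi)=G(f_1,\chi)-\tfrac12\Sigma=G(f_1,\chi)$.

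I expect the main difficulty to be purely bookkeeping rather than conceptual: correctly tracking how the weight $\tfrac{u}{q}\tfrac{v}{q}$ transforms under the two substitutions, in particular the appearance of the $1-(\cdot/q)$ factors and the verification that the resulting cross term reproduces $G(f_1,\chi)$ on the nose. The only genuinely non-formal ingredient is the scaling argument for $\Sigma=0$, which crucially uses that $\chi$ has order greater than $2$; the oddness of $\chi$ is what supplies the sign in the first reduction, and no deeper input (nor even the coprimality $(q,2d)=1$) seems to be needed for this particular lemma.
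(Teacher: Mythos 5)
Your proposal is correct, and its overall skeleton coincides with the paper's: both reduce $G(f_1,\chi)-G(f_2,\chi)$ via the substitution $(u,v)\mapsto(v,q-u)$ to a sum carrying a single linear weight, kill the ``antisymmetric'' part of that weight by the involution $(u,v)\mapsto(q-u,q-v)$ (your symmetrization $S=\Sigma-S$ is exactly the paper's Bernoulli-polynomial step $g(\chi,f,B_1)=0$), and then must show that the unweighted sum $\sum_{u,v}\chi(f(u,v))$ vanishes. The genuine difference is in that last step. The paper invokes the theory of norm residues in $\Q(\sqrt{\Delta})$ (Hecke, Theorem 138) to produce $\delta\equiv r^2-\Delta s^2\pmod q$ with $\chi(\delta)\neq 0,1$ and a linear substitution $\bigl(\begin{smallmatrix} r-Bs & -2Cs\\ 2As & r+Bs\end{smallmatrix}\bigr)$ multiplying the form by $\delta$; you instead use the scalar dilation $(u,v)\mapsto(\lambda u,\lambda v)$, which multiplies the form by $\lambda^2$ and gives $\Sigma=\chi^2(\lambda)\Sigma$. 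Your argument is in effect the paper's with $s=0$, $\delta=\lambda^2$, and it suffices here precisely because the lemma assumes $\chi$ is complex, so $\chi$ is already nontrivial on squares. This is a real simplification for this lemma: it eliminates the norm-residue input and the matrix bookkeeping entirely. What the paper's more elaborate version buys is generality (nontriviality of $\chi$ on the full norm-residue group, a weaker hypothesis than $\chi^2\neq 1$), which is not needed for the statement as given. All the bookkeeping in your reduction (the identity $f_2(x,y)=-f_1(y,-x)$, the sign from $\chi(-1)=-1$, and the transformation of the weights) checks out.
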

\begin{proof} In (\ref{eq:G}) we change the summation by $u\rightarrow v\,,v\rightarrow q-u$. Then for the new variables again $1\leq v,q-u\leq q-1$. Now
\begin{eqnarray*}G(f_1,\chi)&=&\sum_{1\leq u,v\leq q-1}\chi(av^2+anv(q-u)-u^2)\frac{v}{q}\frac{q-u}{q}\\
&=&\sum_{1\leq u,v\leq
q-1}\chi(av^2-anvu-u^2)\frac{v}{q}\frac{-u}{q}
+\sum_{1\leq u,v\leq q-1}\chi(av^2-anvu-u^2)\frac{v}{q}\\
&=&\sum_{1\leq u,v\leq q-1}\chi(-1)\chi(-av^2+anvu+u^2)\frac{v}{q}\frac{-u}{q}-\sum_{1\leq u,v\leq q-1}\chi(f_2(u,v))\frac{v}{q}\\
&=&\sum_{1\leq u,v\leq
q-1}\chi(f_2(u,v))\frac{u}{q}\frac{v}{q}-\sum_{1\leq u,v\leq
q-1}\chi(f_2(u,v))\frac{v}{q}\,.
\end{eqnarray*}
We use the notation 
\begin{equation}\label{def:g}
  g(\chi,f,h):=\sum_{1\leq m,n\leq q-1}\chi(f(m,n))h\left(\frac{n}{q}\right)
\end{equation}
for the quadratic form $f(x,y)=Ax^2+Bxy+Cy^2$ with square-free discriminant $\Delta=B^2-4AC$ and $h(x)\in\Z[x]$.\\

Therefore we have
\begin{equation*}\label{eq:G1-G2-g}G(f_1,\chi)=G(f_2,\chi)-g(\chi,f_2,t)\,.
\end{equation*}
We will prove that 
\begin{equation}\label{eq:g_2}g(\chi,f_2,t)=0\,.
\end{equation}
We will make it by showing that $g(\chi,f_2,1)=0$ and $g(\chi,f_2,t-1/2)=0$.\\

First notice that there is a $\delta$ with $(\delta,q)=1$ such that $\chi(\delta)\neq 0,1$ and one can find $r,s$ for which $\delta\equiv r^2-\Delta s^2\pmod q$. The argument that follows is for square-free $q$ and the one for general $q$ follows easily. The existence of such $r$ and $s$ follows from the theory of norm residues modulo $q$ in $\Q(\sqrt{\Delta})$ for $(q,\Delta)=1$, see Theorem 138 and Lemma from \S47 in \cite{hecke}. Basically we use that the group of norm residues modulo $q$ is big, take element $\delta_1$ from it and then choose $\delta$ to be $\delta_1$ or $4\delta_1$ depending on the residue of the discriminant of the field modulo $4$. In this case $r^2-\Delta s^2$ is the norm, or four times the norm, of an algebraic integer in $\Q(\sqrt{\Delta})$. \\

Now if we choose $M$ and $N$ satisfying
$$(2AM+BN)+\sqrt{\Delta}N=\left((2Am+Bn)+\sqrt{\Delta}n\right)(r+\sqrt{\Delta}s)$$
we get
$$\left((2AM+BN)+\sqrt{\Delta}N\right)\left((2AM+BN)-\sqrt{\Delta}N\right)=4Af(M,N)=4Af(m,n)(r^2-\Delta s^2)\,.$$

>From definition (\ref{eq:f2_2}) the coefficient $A$ of $f_2$ equals $1$, i.e. $(A,q)=1$, so we get $f_2(M,N)\equiv f_2(m,n)\delta\pmod q$. One checks that
\begin{displaymath}
\left( \begin{array}{c}
M \\
N
\end{array}\right)=\left( \begin{array}{cc}
r-Bs & -2Cs \\
2As & r+Bs
\end{array}\right) \left( \begin{array}{c}
m \\
n
\end{array} \right)
\end{displaymath}
with determinant of the upper matrix, denoted by $\Tf$, equal to
$r^2-\Delta s^2\neq 0$. Since $\Tf$ is invertible and $m$ and $n$
are linear forms of $M$ and $N$, if some of the latter do not take
each residue modulo $q$ exactly $q$ times, then some of the
residues $m$ or $n$ will not either. Therefore when $0\leq m,n\leq
q-1$ also $0\leq M,N\pmod q\leq q-1$. Notice as well that
$$g(\chi,f,1)=\sum_{0\leq m,n\leq q-1}\chi(f(m,n))$$
because $\chi$ is not a real character and
$$\sum_{0\leq m\leq
q-1}\chi(Am^2)=\sum_{0\leq n\leq q-1}\chi(Cn^2)=0\,.$$ That is why
we can substitute $m$ and $n$ with $M$ and $N$ in the sum
$g(\chi,f_2,1)$. We get $g(\chi,f_2,1)=\chi(\delta)g(\chi,f_2,1)$.
Hence
\begin{equation}\label{eq:g0}g(\chi,f_2,1)=\sum_{1\leq m,n\leq q-1}\chi(f(m,n))=0\,.
\end{equation}
Further, consider the Bernoulli polynomial $\displaystyle B_1(x):=x-\frac{1}{2}$. We notice that $\displaystyle B_1(1-x)=\frac{1}{2}-x=-B_1(x)$. Therefore $\displaystyle\chi(f(m,n))B_1\left(\frac{n}{q}\right)=-\chi\left(f(q-m,q-n)\right)B_1\left(\frac{q-n}{q}\right)$ and  \begin{eqnarray*}g(\chi,f,B_1)&=&\sum_{1\leq m,n\leq q-1}\chi(f(m,n))B_1(\frac{n}{q})=-\sum_{1\leq m,n\leq q-1}\chi(f(q-m,q-n))B_1(\frac{q-n}{q})\\
&=&-g(\chi,f,B_1)\,.
\end{eqnarray*}
We got that $g(\chi,f,B_1)=0$. This and (\ref{eq:g0}) yield
(\ref{eq:g_2}) and therefore we complete the proof.
\end{proof}

Further we state
\begin{lem}\label{cla:C-C} For any odd character $\chi$ with conductor $q>2$ we have
$$C_\chi(a,q-n)=-C_\chi(a,n)\,.$$
\end{lem}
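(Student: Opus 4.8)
The plan is to exploit that the parameter $n$ enters the forms $f_1,f_2$ of \eqref{eq:f1_2}, \eqref{eq:f2_2} only through the middle coefficient $an$, and that replacing $n$ by $q-n$ turns this into $a(q-n)\equiv -an\pmod q$. Writing $\tilde f_1(x,y)=ax^2+a(q-n)xy-y^2$ and $\tilde f_2(x,y)=x^2+a(q-n)xy-ay^2$ for the forms attached to $C_\chi(a,q-n)$, I would first observe that modulo $q$ we have $\tilde f_j(u,v)\equiv f_j(u,q-v)$, so that (as $\chi$ only sees residues mod $q$) the substitution $v\mapsto q-v$ in \eqref{eq:G} gives
\[
G(\tilde f_j,\chi)=\sum_{1\le u,v\le q-1}\chi(f_j(u,v))\frac uq\Bigl(1-\frac vq\Bigr)=\sum_{1\le u,v\le q-1}\chi(f_j(u,v))\frac uq-G(f_j,\chi).
\]
Summing over $j=1,2$ and multiplying by $q$ (recall \eqref{def:c-a-n}) yields
\[
C_\chi(a,q-n)=q\sum_{j=1}^2\sum_{1\le u,v\le q-1}\chi(f_j(u,v))\frac uq-C_\chi(a,n),
\]
so everything reduces to showing that the first-variable-weighted contribution vanishes.

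Next, introduce the weighted sum with the weight placed on the \emph{first} variable, $\tilde g(\chi,f,h):=\sum_{1\le u,v\le q-1}\chi(f(u,v))h(u/q)$, the mirror of \eqref{def:g}, and split the weight $t=B_1+\tfrac12$ with $B_1(x)=x-\tfrac12$. The $B_1$-part vanishes by exactly the reflection argument used at the end of the proof of Lemma \ref{cla:G1=G2}: the change $(u,v)\mapsto(q-u,q-v)$ fixes $\chi(f_j(u,v))$ (since $f_j$ is homogeneous of degree $2$, so $f_j(q-u,q-v)\equiv f_j(u,v)\pmod q$) while sending $B_1(u/q)\mapsto-B_1(u/q)$, whence $\tilde g(\chi,f_j,B_1)=-\tilde g(\chi,f_j,B_1)=0$. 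Thus $\tilde g(\chi,f_j,t)=\tfrac12\tilde g(\chi,f_j,1)=\tfrac12\sum_{u,v}\chi(f_j(u,v))$, and it remains to prove that $\sum_{u,v}\chi(f_1(u,v))+\sum_{u,v}\chi(f_2(u,v))=0$.

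The key algebraic fact is the exact identity $f_1(x,y)=-f_2(-y,x)$, which is immediate from \eqref{eq:f1_2} and \eqref{eq:f2_2}. Since $\chi$ is odd, $\chi(f_1(u,v))=-\chi(f_2(-v,u))=-\chi(f_2(q-v,u))$, and reindexing by $v\mapsto q-v$ (then renaming the two summation variables) gives $\sum_{u,v}\chi(f_1(u,v))=-\sum_{u,v}\chi(f_2(u,v))$. Hence the two constant-weight sums cancel, the whole first-variable-weighted contribution is $0$, and I conclude $C_\chi(a,q-n)=-C_\chi(a,n)$.

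I expect the main obstacle to be organizing the cancellation correctly rather than any analytic difficulty: the individual sums $\sum_{u,v}\chi(f_j(u,v))$ need \emph{not} vanish on their own without the norm-residue input and the coprimality hypothesis $(q,2d)=1$ used in Lemma \ref{cla:G1=G2}. The point is that pairing $f_1$ against $f_2$ through the symmetry $f_1(x,y)=-f_2(-y,x)$ makes them cancel using only the oddness of $\chi$, which is precisely what allows the statement to hold for \emph{any} odd $\chi$ with $q>2$, with no coprimality assumption on $q$.
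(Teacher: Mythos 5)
Your argument is correct, but it takes a genuinely different (and longer) route than the paper's. The paper proves the lemma in one stroke: substituting $n\to q-n$ directly into the argument of $\chi$ in $G(f_1,\chi)$ gives $\chi\bigl(ax^2-anxy-y^2\bigr)$, and since $-\bigl(ax^2-anxy-y^2\bigr)=f_2(y,x)$, oddness of $\chi$ together with the symmetry of the weight $\frac{x}{q}\frac{y}{q}$ under $x\leftrightarrow y$ yields $G(f_1,\chi)_{q-n}=-G(f_2,\chi)_n$ and, symmetrically, $G(f_2,\chi)_{q-n}=-G(f_1,\chi)_n$; adding and multiplying by $q$ finishes via \eqref{def:c-a-n}. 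You instead keep the forms' arguments intact and change the summation variable $v\mapsto q-v$, which produces the desired $-C_\chi(a,n)$ plus an error term weighted by $u/q$ alone; you then kill that error term by the Bernoulli reflection $(u,v)\mapsto(q-u,q-v)$ together with the pairing $f_1(x,y)=-f_2(-y,x)$ and oddness of $\chi$. Both proofs ultimately rest on the same two structural facts (oddness of $\chi$ and the $f_1\leftrightarrow f_2$ relation), but the paper's ordering of the steps --- transforming the argument of $\chi$ before touching the summation variables --- means no error term ever appears, so the Bernoulli split and the cancellation of the unweighted sums become unnecessary. Your closing observation is nevertheless a genuine bonus: the individual sums $\sum_{u,v}\chi(f_j(u,v))$ cannot be handled here by the norm-residue argument of Lemma \ref{cla:G1=G2}, since the present lemma assumes no coprimality between $q$ and the discriminant and only that $\chi$ is odd, and your pairing shows precisely why they still cancel in the aggregate.
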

\begin{proof}To show this we substitute $n\rightarrow q-n$ in the definition of $G(f_1,\chi)$:
\begin{eqnarray*}G(f_1,\chi)_{q-n}&=&\sum_{1\leq x,y\leq q-1}\chi(ax^2+a(q-n)xy-y^2)\frac{x}{q}\frac{y}{q}\\
&=&\sum_{1\leq x,y\leq q-1}\chi(ax^2-anxy-y^2)\frac{x}{q}\frac{y}{q}\\
 &=&\sum_{1\leq x,y\leq q-1}\chi(-1)\chi(-ax^2+anxy+y^2)\frac{x}{q}\frac{y}{q}\\
 &=&-G(f_2,\chi)_n\,.
 \end{eqnarray*}
 Thus we have that
 $$\frac 1 q C_\chi(a,q-n)=G(f_1,\chi)_{q-n}+G(f_2,\chi)_{q-n}=-G(f_2,\chi)_n-G(f_1,\chi)_n=-\frac 1 q C_\chi(a,n)\,.$$\end{proof}

As an immediate corollary we also get
\begin{lem}\label{cla:C0} For any odd character $\chi$ with conductor $q>2$ and for any integer $a$ we have $$C_\chi(a,0)=0\,.$$
\end{lem}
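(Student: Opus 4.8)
The statement to prove is Lemma~\ref{cla:C0}, asserting that $C_\chi(a,0)=0$ for any odd character $\chi$ with conductor $q>2$ and any integer $a$. The plan is to derive this directly from the previously established symmetry in Lemma~\ref{cla:C-C}, which gives $C_\chi(a,q-n)=-C_\chi(a,n)$ for all $n$. Since the quantity $C_\chi(a,n)$ is defined through the sums $G(f_1,\chi)$ and $G(f_2,\chi)$, whose dependence on $n$ enters only through the coefficient $an$ of the cross term $xy$ in the forms $f_1$ and $f_2$, the natural expectation is that $C_\chi(a,n)$ depends on $n$ only modulo $q$, because $\chi$ has conductor $q$. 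Indeed, replacing $an$ by $a(n+q)$ changes each argument $f_j(x,y)$ by a multiple of $q$, which does not affect the value of $\chi$.

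The key step is therefore to substitute $n=q$ into the symmetry relation of Lemma~\ref{cla:C-C}. First I would observe that $C_\chi(a,q-n)=-C_\chi(a,n)$ holds as an identity in $n$. Setting $n=q$ yields $C_\chi(a,0)=-C_\chi(a,q)$. Then I would invoke the periodicity just described, namely that $C_\chi(a,q)=C_\chi(a,0)$, since shifting $n$ by the conductor $q$ leaves the values $\chi(f_j(x,y))$ unchanged. Combining these two equations gives $C_\chi(a,0)=-C_\chi(a,0)$, and since we are working in a field of characteristic zero (the values live in $\Lf_\chi\subset\C$), this forces $2C_\chi(a,0)=0$ and hence $C_\chi(a,0)=0$.

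The only point requiring a small amount of care, and the main (though modest) obstacle, is justifying the periodicity $C_\chi(a,n+q)=C_\chi(a,n)$ rigorously rather than by appeal to intuition. This follows because in the defining sum~\eqref{eq:G} the argument of $\chi$ is $f_j(u,v)$, and from~\eqref{eq:f1_2} and~\eqref{eq:f2_2} the coefficient of $uv$ is $\pm an$; replacing $n$ by $n+q$ adds $\pm aquv$ to this argument, an integer multiple of $q$, so that $\chi(f_j(u,v))$ is unchanged for every pair $(u,v)$ in the summation range, while the weights $u/q$ and $v/q$ do not involve $n$ at all. Hence each of $G(f_1,\chi)$, $G(f_2,\chi)$, and therefore $C_\chi(a,n)$ via~\eqref{def:c-a-n}, is genuinely periodic in $n$ with period $q$. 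With this verified, the corollary follows immediately from the chain $C_\chi(a,0)=-C_\chi(a,q)=-C_\chi(a,0)$.
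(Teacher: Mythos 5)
Your proposal is correct and follows essentially the same route as the paper: the paper's one-line argument $C_\chi(a,0)=C_\chi(a,q-0)=-C_\chi(a,0)$ is exactly the combination of Lemma~\ref{cla:C-C} with the $q$-periodicity of $C_\chi(a,n)$ in $n$ that you use, just with the substitution made at $n=0$ instead of $n=q$. Your explicit justification of the periodicity (the argument of $\chi$ shifts by a multiple of $q$) is a welcome bit of added care, but the mathematics is identical.
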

Indeed, $C_\chi(a,0)=C_\chi(a,q-0)=-C_\chi(a,0)$ and therefore the claim. This also means that under the conditions of Lemma \ref{cla:G1=G2} for any $n$ divisible by $q$ we have $C_\chi(a,n)=0$ and therefore $G(f_1,\chi)=0$ as well.\\

\section{Proof of Theorem \ref{thm:1}}\label{sec:proofThm}
\hspace{0.5cm}Let $d$ be as in Theorem \ref{thm:1}. We assume in the sequel that $a>1$, since the case $a=1$ follows from Yokoi's conjecture proved in \cite{biro}.

Suppose now that $\chi$ is an odd primitive
character modulo $q>1$ and $(q,2d)=1$.  Assume, in addition, that
$\chi$ is a complex character, i.e. $\chi^2\neq 1$.

In this case below we will use Lemma \ref{main}, Lemma \ref{cla:G1=G2} and Lemma
\ref{def:beta}. By (\ref{def:c-a-n}) and (\ref{eq:2A-0}) we get
\begin{eqnarray}\label{eq:congrB}4q^2\left(\prod_{\left.p\right|q^{-}}\left(p\chi_{+}^2(p)-1\right
)\right)G\left(f_1,\chi\right)& + & \nonumber\\
+n\chi (d)\left({\frac d q}\right)c_aq^2J_{\chi_{+}}\gamma_{\chi}
\mu (q_{-})\chi_{+}(-1)\left(\prod_{p\mid q^{-}}\left(p^2
\chi_{+}^2(p)-1\right)\right)&\equiv& 0\pmod\Rf\,,
\end{eqnarray}
where the prime ideal $\Rf$ of $\Lf_\chi$ lies above the rational prime
$r$, we suppose $m_\chi\in\Rf$ and $(r,q)=1$. Then it is clear, using
(\ref{eq:G}), the definition of $f_1$ and $c_a$ in (\ref{eq:f1_2}) and
(\ref{def:c}), that the truth of (\ref{eq:congrB})
depends only on the residues of $a$ and $n$ modulo $qr$.\\

Let us now define a directed graph in a similar but slightly
different way than in \cite{biro}. Let us denote by an arrow
$$q\rightarrow r$$
that the following conditions are true: $q>1$ is an odd integer,
there is an odd primitive character $\chi$ modulo $q$ such that
$\chi^2\neq 1$, and there is a prime ideal $\Rf$ of $\Lf_{\chi}$
such that $\Rf$ lies above the odd rational prime $r$, which
satisfies $(r,q)=1$ and $m_\chi\in\Rf$. The latter condition can
arise for example for an odd character if $r\mid h_q^{-}$, where
$h_q^{-}$ is the relative class number
of the cyclotomic field $\Q(\zeta_q)$ for $\zeta_q=e\br{1/q}$ (Theorem 4.17 \cite{wash}).\\

We will use the following claim which was proved as Claim 5.1 of
\cite{Lapkova} as a generalization of Fact B of \cite{biro}.
\begin{cla}\label{factB} If $h(d)=1$ for the square-free discriminant $d=(an)^2+4a$, then $a$ and $an^2+4$ are primes, and for any prime $p\neq a$ such that $2<p<an/2$ we have $$\Leg{d}{p}=-1\,.$$
\end{cla}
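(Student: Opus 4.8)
The plan is to establish each of the three assertions by exploiting the factorization of the fundamental unit and the structure of the principal ideal class, which is where the hypothesis $h(d)=1$ enters. First I would record the basic arithmetic of the field: since $d=(an)^2+4a$ and $\alpha=(\sqrt{d}-an)/2$ satisfies $x^2+(an)x-a=0$, the element $\varepsilon = \alpha + an = (\sqrt d+an)/2$ (or a close relative) has norm $N(\varepsilon)=-a$, so $a$ is represented as a norm from $\Oc_K$. The key observation is that this gives a genuine factorization of the ideal $(a)$ into $\pf\overline{\pf}$ with $N\pf = a$; the condition $h(d)=1$ forces $\pf$ to be principal.

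Next I would argue the primality of $a$. Suppose $a=a_1 a_2$ is a nontrivial factorization; then the norm relation splits the ideal further, and each factor $\pf_i$ with $N\pf_i = a_i$ must again be principal because $h(d)=1$. A principal ideal of norm $a_i$ corresponds to an element $\beta\in\Oc_K$ with $|N\beta|=a_i$, i.e. a solution of $x^2 + (an)xy - ay^2 = \pm a_i$ type equation (using the forms $f_1,f_2$ computed in (\ref{eq:f1_2}), (\ref{eq:f2_2})). The plan is to show that for $1<a_i<a$ such a small-norm element cannot exist, because the only totally positive elements of norm dividing $a$ in this specific field are controlled by the continued fraction expansion $\alpha=[0,\overline{n,an}]$, which has period $2$ and hence very few reduced forms in the principal cycle. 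An analogous argument applies to $an^2+4$, which arises because $d = a(an^2+4)$ and $an^2+4$ is the complementary factor; here I would use that $an^2+4$ is also realized as a norm and repeat the divisibility-forces-principal-forces-contradiction scheme.

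Finally, for the quadratic-residue condition, I would argue by contradiction: if some prime $p\neq a$ with $2<p<an/2$ satisfies $\Leg{d}{p}=1$, then $p$ splits in $K$ as $p\Oc_K=\qf\overline{\qf}$, and since $h(d)=1$ the prime $\qf$ is principal, generated by some $\gamma$ with $|N\gamma|=p$. The heart of the matter is the size estimate: a generator of a principal prime ideal of norm $p$, after multiplying by a suitable power of the fundamental unit to bring it into a reduced range, yields a representation of $\pm p$ by one of the binary quadratic forms $f_1,f_2$. I would then show that the inequality $p<an/2$ together with the explicit shape of these forms (whose reduced representatives have bounded small coefficients determined by $a$ and $n$) makes such a representation impossible, since the smallest nonzero values taken by the principal forms in that range exceed $p$. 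Thus $\Leg{d}{p}=-1$.

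The main obstacle I anticipate is the third part, the quadratic-residue bound: one must carefully translate ``$\qf$ is principal of norm $p$'' into an explicit Diophantine representation by $f_1$ or $f_2$ and then rule it out using the bound $p<an/2$. This requires understanding precisely which values the principal binary forms attain in the relevant range, and in particular showing that no unit multiple of the generator lands in the window forcing a small representation. Since this is exactly the content of Claim 5.1 of \cite{Lapkova}, cited as the source, I would ultimately defer the delicate inequality bookkeeping to that reference, but the conceptual skeleton above (norm factorization plus principality plus a size estimate from the period-$2$ continued fraction) is what drives all three conclusions.
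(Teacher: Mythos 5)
The paper gives no proof of this claim at all: it simply cites Claim 5.1 of \cite{Lapkova} (a generalization of Fact B of \cite{biro}), and the argument given there is exactly the one you outline --- primes that ramify in or split in $K$ lie below principal prime ideals when $h(d)=1$, a principal ideal of norm $m<\sqrt{d}/2\approx an/2$ forces a representation of $\pm m$ by a form in the principal cycle, and the period-$2$ expansion $\alpha=[0,\overline{n,an}]$ leaves only the values $1$ and $a$ in that range. Your sketch is consistent with this, and your deferral of the size-estimate bookkeeping to \cite{Lapkova} is precisely what the paper itself does (the only cosmetic slip is calling $(\sqrt{d}+an)/2$ a ``fundamental unit''; it has norm $-a$, not $\pm1$).
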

Also we recall the statement of Theorem 1.1 of \cite{Lapkova}.
\begin{thm}\label{thm:La}If $d = (an)^2 + 4a$ is square-free for odd positive integers $a$ and $n$ such that
$43 \cdot 181 \cdot 353 \mid n$, then $h(d) > 1$.
\end{thm}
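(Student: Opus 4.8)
The plan is to argue by contradiction: assume $h(d)=1$ and exploit the hypothesis $43\cdot 181\cdot 353\mid n$ through the three prime moduli $q\in\{43,181,353\}$ one at a time. First I would dispose of the degenerate cases. By Claim \ref{factB} the integer $a$ is prime, so $(q,a)>1$ can occur only if $a\in\{43,181,353\}$, a finite list to be checked by hand (or by the other two primes). In every remaining case $(q,2d)=1$ for all three $q$, since $q\mid n$ forces $d\equiv 4a\pmod q$ with $q$ odd and $q\nmid a$.

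The key simplification is that $q\mid n$ kills the form sum. By Lemma \ref{cla:C0} and the remark following it we have $C_\chi(a,n)=0$ and $G(f_1,\chi)=0$ for every odd primitive complex character $\chi$ modulo $q$. Substituting this into Lemma \ref{main}, the congruence (\ref{eq:2A-0}) collapses to
$$n\,\chi(d)\Leg{d}{q}q\,\beta_\chi c_a\equiv 0\pmod{\Rf}$$
for every arrow $q\to r$, that is, every prime ideal $\Rf$ of $\Oc_{\Lf_\chi}$ above an odd prime $r$ with $(r,q)=1$ and $m_\chi\in\Rf$. Here $q$, $\chi(d)$ and $\Leg{d}{q}$ are units at $\Rf$, so on the residues with $r\nmid n$ this reduces to $\beta_\chi c_a\equiv 0\pmod{\Rf}$; and whenever the fixed algebraic number $\beta_\chi$ is a unit at $\Rf$ (a finite check via the formula of Lemma \ref{def:beta}), we obtain $c_a\equiv 0$, i.e. $a\equiv-\overline{\chi}(a)\pmod{\Rf}$ by the definition (\ref{def:c}) of $c_a$.

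Next I would produce, for each of the three primes, an explicit such arrow and character. The natural source is a small odd prime $r$ dividing the relative class number $h_q^-$ of $\Q(\zeta_q)$, which by the divisibility criterion recalled after (\ref{def:c-a-n}) guarantees an $\Rf$ with $m_\chi\in\Rf$. The resulting congruence $a\equiv-\overline{\chi}(a)\pmod{\Rf}$ links the residue of $a$ modulo $q$ (through $\chi(a)$) with its residue modulo $r$, and so eliminates admissible residues of $a$ modulo $qr$. Carrying this out for $q=43,\,181,\,353$, the role of this specific triple is that the three congruences — together with the sub-cases $r\mid n$, which have to be covered by further arrows — leave no admissible class for $a$, or else force $\Leg{d}{p}=+1$ for a prime $p\neq a$ with $2<p<an/2$, contradicting Claim \ref{factB}.

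The main obstacle is precisely this final combinatorial step. One must verify (i) that each $q\in\{43,181,353\}$ really admits an arrow at which $\beta_\chi$ is a unit, which means computing $m_\chi$ and the quantities $J_{\chi_{+}}$ and $\gamma_\chi$ of Lemma \ref{def:beta} modulo $\Rf$; and (ii) that the residue restrictions on $a$ and $n$ coming from the three primes (including the $r\mid n$ alternatives) are jointly unsatisfiable. The particular choice of these three primes is evidently the outcome of a search for a triple whose congruences exhaust all residue possibilities, and proving that they do is exactly where the computation of \cite{Lapkova} enters; I would not expect a purely conceptual shortcut here.
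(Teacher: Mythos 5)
This statement is not proved in the paper at all: it is Theorem 1.1 of \cite{Lapkova}, imported verbatim as a black box (the present paper only \emph{uses} it, at the very end of \S\ref{sec:proofThm}, to rule out $43\cdot 181\cdot 353\mid n$). So there is no internal proof to compare against; what you have written is a reconstruction of the external result. Your mechanism is the right one and matches the method of \cite{Lapkova} in spirit: $q\mid n$ kills $C_\chi(a,n)$ and $G(f_1,\chi)$ by Lemma \ref{cla:C0}, the congruence of Lemma \ref{main} collapses to (\ref{eq:congrE}), and after stripping unit factors one is left with $c_a\equiv 0\pmod{\Rf}$, a condition coupling $a\bmod q$ with $a\bmod r$, to be played off against the quadratic-residue constraints of Claim \ref{factB}.

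The genuine gap is that the decisive step is asserted rather than carried out, and it cannot be closed by the $q$-level information alone. You need, for each $q\in\{43,181,353\}$, explicit odd primitive complex characters $\chi$ and primes $r$ with $m_\chi\in\Rf$ (equivalently, suitable divisors of $h_q^-$), a verification that $\beta_\chi$ is a unit at $\Rf$, and then a joint elimination over residues of \emph{both} $a$ and $n$ modulo $qr$: in the sub-case $r\mid n$ the congruence $c_a\equiv 0$ evaporates entirely and you retain only $\Leg{a}{q}=\Leg{a}{r}=-1$, which three primes' worth of Legendre conditions can never exhaust. So the proof stands or falls on the specific numerical choices and the residue-by-residue check, which is exactly the content of \cite{Lapkova} and is absent from your sketch. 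Two smaller points: the $\ell=2$ framework of \S\ref{sec:apply-biro} (hence Lemma \ref{main} and Lemma \ref{cla:C0}) is derived only for $a>1$, while the theorem as stated includes $a=1$, which needs the period-one (Yokoi-type) formulas instead; and Claim \ref{factB} gives $a$ prime only under $h(d)=1$ with the appropriate size restrictions, so the degenerate cases $a\in\{1,43,181,353\}$ need explicit separate treatment rather than a parenthetical remark.
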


Let $q\rightarrow r$ hold. Then by the considerations above and by
Claim \ref{factB} we get that if $h(d)=1$ for the square-free
discriminant $d=(an)^2+4a$ satisfying $P^{+}(qr)<an/2$, and $a$ is
different from any prime factor of $qr$, then
\begin{equation}\label{eq:Leg_m1}\Leg{(an)^2+4a}{p}=-1
\end{equation}
for every prime divisor $p$ of $qr$, and (\ref{eq:congrB}) also
holds. We see that (\ref{eq:Leg_m1}), similarly to
(\ref{eq:congrB}), depends only on the residues of
$a$ and $n$ modulo $qr$.\\

\begin{lem}\label{thm:BGL} If $d=(an)^2+4a$ is square-free for odd positive integers $a$ and
$n$ with $an>2\cdot 127$,
\begin{equation}a \neq 1,3,5,7,13,17,19,37,73,127\,
\end{equation}
and $h(d)=1$, then we have
$$n\equiv 0 \pmod {3\cdot 5\cdot 7\cdot 13\cdot
19\cdot 37}\,.$$
\end{lem}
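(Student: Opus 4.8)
The strategy is to show that each of the primes $r \in \{3,5,7,13,19,37\}$ must divide $n$, and to do this separately for each $r$ by exhibiting a suitable arrow $q\rightarrow r$ in the directed graph defined above. The governing principle is that if $q\rightarrow r$ holds and $h(d)=1$ with $P^{+}(qr)<an/2$ and $a$ coprime to $qr$, then both the congruence (\ref{eq:congrB}) modulo $\Rf$ and the quadratic-residue conditions (\ref{eq:Leg_m1}) hold, and all of these depend only on the residues of $a$ and $n$ modulo $qr$. Thus for a fixed arrow $q\rightarrow r$ one obtains a finite system of congruence conditions on the pair $(a\bmod qr,\, n\bmod qr)$, and the plan is to verify computationally that every solution of this system forces $r\mid n$.

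The first step is to locate, for each target prime $r\in\{3,5,7,13,19,37\}$, an odd conductor $q$ with $(q,r)=1$ carrying an odd primitive complex character $\chi$ such that some prime ideal $\Rf\mid r$ of $\Oc_{\Lf_\chi}$ contains $m_\chi$; as noted after the graph definition, this happens in particular when $r\mid h_q^{-}$, the relative class number of $\Q(\zeta_q)$, so the search reduces to inspecting relative class numbers of cyclotomic fields of small conductor. Having fixed such a $q\rightarrow r$, the second step is, for each residue class of $a$ modulo $qr$ (excluding the listed small primes and those dividing $qr$) and each residue class of $n$ modulo $qr$ with $r\nmid n$, to evaluate the left-hand side of (\ref{eq:congrB}) modulo $\Rf$ together with the Legendre conditions (\ref{eq:Leg_m1}), and to confirm that no admissible pair survives. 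The quantities $G(f_1,\chi)$, $\gamma_\chi$, $J_{\chi_{+}}$, $\beta_\chi$ and the product over $p\mid q_{-}$ are all finite sums computable once $\chi$ and $q$ are fixed, using Lemma \ref{cla:G1=G2} to replace $G(f_1,\chi)+G(f_2,\chi)$ by $2G(f_1,\chi)$ and Lemma \ref{def:beta} to make $\beta_\chi$ explicit. The condition $an>2\cdot 127$ guarantees $P^{+}(qr)<an/2$ for the moderate conductors in play, so that Claim \ref{factB} legitimately supplies (\ref{eq:Leg_m1}).

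The computation is organized so that the six primes are handled essentially independently, and then the six resulting divisibilities $r\mid n$ are combined by the Chinese Remainder Theorem into the single congruence $n\equiv 0\pmod{3\cdot5\cdot7\cdot13\cdot19\cdot37}$. In practice one would implement the residue-class sieving in C++ (using character tables and ideal data precomputed in SAGE), exactly the computational framework described in Section \ref{sec:notations} of the paper.

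The main obstacle I expect is twofold. First, \emph{existence}: one must actually find, for each of the six primes and especially for the larger ones $r=19$ and $r=37$, a conductor $q$ of manageable size admitting the required character $\chi$ and prime $\Rf\mid r$ with $m_\chi\in\Rf$; if the smallest such $q$ is large, the residue system modulo $qr$ becomes prohibitively big. Second, \emph{sufficiency}: even with a good arrow, a single congruence (\ref{eq:congrB}) together with finitely many Legendre conditions may not by itself eliminate all pairs $(a,n)$ with $r\nmid n$; one may need to combine several arrows $q\rightarrow r$ (several characters, or several primes $\Rf$ above $r$) and intersect their constraints to cut the surviving residue classes down to the empty set. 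Designing the combination of arrows so that the joint sieve closes, while keeping the modulus $qr$ small enough for the search to terminate in reasonable time, is the delicate part of the argument.
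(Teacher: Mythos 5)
Your proposal correctly identifies the machinery of the paper's proof: arrows $q\rightarrow r$, the congruence (\ref{eq:congrB}) together with the Legendre conditions (\ref{eq:Leg_m1}) from Claim \ref{factB}, the observation that everything depends only on $(a,n)$ modulo $qr$, and a computer-assisted sieve over residue classes. However, the organization you propose --- treating the six primes $3,5,7,13,19,37$ independently, each via its own arrows $q\rightarrow r$ and its own residue system modulo $qr$, and then combining by the Chinese Remainder Theorem --- is not what the paper does, and the ``sufficiency'' obstacle you flag at the end is precisely where this organization is likely to fail. The paper runs a \emph{joint} sieve: it fixes the pair $(a_0,n_0)$ modulo $P=3\cdot5\cdot7\cdot13\cdot19\cdot37$ and checks \emph{all} arrows whose $qr$ is built from these six primes simultaneously; only the intersection of all these constraints forces $n_0=0$ for most pairs. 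The surviving exceptional pairs are then eliminated in a second stage by lifting to the modulus $P\cdot 17\cdot 73\cdot 127$ and applying arrows involving the auxiliary primes $17$, $73$, $127$ (for instance $3\times 73\rightarrow 17$, $127\rightarrow 5,13$, $13\times 19\rightarrow 3,7,73,127$, $3\times 127\rightarrow 37$). These auxiliary primes do not appear in the conclusion of the lemma at all, so a search restricted to arrows targeting the six primes in the modulus would never find them; correspondingly, the hypothesis of the lemma must exclude $a=17,73,127$ and require $an>2\cdot 127$, which your per-prime plan gives no reason to anticipate.

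Concretely, the missing idea is that a single prime $r$ need not be forced to divide $n$ by any manageable collection of arrows pointing at $r$ alone; what closes the sieve is the interaction between arrows with different targets (an arrow such as $13\times 19\rightarrow 3,7,73,127$ serves several target primes at once, and the conditions (\ref{eq:Leg_m1}) at the prime divisors of $q$ couple the residues of $a$ and $n$ across the different moduli). Your plan as stated would therefore very plausibly leave surviving residue classes for at least some of the primes, and it provides no mechanism for bringing in $17$, $73$, $127$. The overall method is right, but the decomposition into six independent one-prime subproblems is a genuine gap relative to the two-stage joint sieve the paper actually carries out.
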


\begin{proof} We apply the arrows
$$5\times 19\rightarrow 13,$$
$$7\times 19\rightarrow 13,37,73,$$
$$13\times 19\rightarrow 3,7,73,127,$$
$$3\times 5\times 19\rightarrow 37,73,$$
$$7\times 13\rightarrow 37,$$
$$3\times 73\rightarrow 17,$$
$$3\times 37\rightarrow 19,$$
$$5\times 37\rightarrow 13,$$
$$3\times 7\times 13\rightarrow 19,37,$$
$$7\times 17\rightarrow 5,$$
$$127\rightarrow 5,13,$$
$$3\times 127\rightarrow 37 .$$

It is easy to check that the maximal prime factor of any $q$ is at most $127$ and the maximal value of $r$ is $127$, so our
conditions guarantee that $P^{+}(qr)<an/2$, and $a$ is different from any
prime factor of $qr$ in every case. One can check by concrete
computations (finding a suitable character and a suitable prime
ideal in every case) that these are indeed arrows.\\

Let $P:=3\cdot 5\cdot 7\cdot 13\cdot 19\cdot 37$, and let us denote by $A$ the set of those arrows from the above list where $qr$ consists only of primes dividing $P$. Let us denote by $B$ the set of those arrows from the above list which are not in $A$, i.e. where $qr$ is divisible by $17$, $73$ or $127$.

In the first part of the proof we apply only the arrows from  $A$.  We fix the residue $a_0$ of $a$ and $n_0$ of $n$ modulo $P$, and then the residues of $a$ and $n$ modulo $qr$ are determined for every arrow from $A$. For every fixed pair $0 \le a_0,n_0 < P$  we check (\ref{eq:congrB}) and (\ref{eq:Leg_m1}) for every such arrow. We find that for most pairs $(a_0,n_0)$ the implied conditions yield $n_0=0$. In the second part of the proof it is enough to deal with the exceptional $(a_0,n_0)$ pairs, i.e with those pairs for which $n_0>0$ and (\ref{eq:congrB}) and (\ref{eq:Leg_m1}) are true for this pair and for every arrow from  $A$.

In the second part of the proof we increase the modulus to $P\cdot 17\cdot 73\cdot 127$. We fix the residues $A_0$ of $a$ and $N_0$ of $n$ modulo $P\cdot 17\cdot 73\cdot 127$, but we consider only such pairs $0 \le A_0,N_0 < P\cdot 17\cdot 73\cdot 127$ for which there is an exceptional pair $(a_0,n_0)$ in the above sense such that $A_0 \equiv a_0\pmod {P}$ and $N_0 \equiv n_0\pmod {P}$. For every such pair $(A_0,N_0)$ and for every arrow from $B$ we check (\ref{eq:congrB}) and (\ref{eq:Leg_m1}). This eventually leads only to cases $N_0=0$, which implies $n_0=0$. This proves the lemma.

We explained in this way the theoretical part of the proof, but
the computer calculations are also very important. To save space
we do not present them here, but one can find them at the address \cite{www}.

\end{proof}

In the sequel we will use such cases when $q\rightarrow r$ holds, $h(d)=1$ for the square-free
discriminant $d=(an)^2+4a$ satisfying $P^{+}(qr)<an/2$, $a$ is
different from any prime factor of $qr$ (just as above), and in addition, either $r$ divides $n$, or $q$ divides $n$. Note that in the first case we have that $n \in \Rf$ (since $\Rf$ lies above $r$), so (\ref{eq:congrB}) reduces to
\begin{eqnarray}\label{eq:congrC}4q^2\left(\prod_{\left.p\right|q^{-}}\left(p\chi_{+}^2(p)-1\right
)\right)G\left(f_1,\chi\right)&
\equiv& 0\pmod\Rf\,,
\end{eqnarray}
so in this case (\ref{eq:Leg_m1}) and (\ref{eq:congrC}) are valid.\\

If $q$ divides $n$, from Lemma \ref{cla:C0} we get $G(f_1,\chi)=0$, so (\ref{eq:congrB}) transforms to
\begin{equation}\label{eq:congrE} n\chi (d)\left({\frac d q}\right)c_aq^2J_{\chi_{+}}\gamma_{\chi}
\mu (q_{-})\chi_{+}(-1)\left(\prod_{p\mid q^{-}}\left(p^2
\chi_{+}^2(p)-1\right)\right)\equiv 0\pmod\Rf\,.
\end{equation}
We remark that most of the factors in this congruence are easily checked to be nonzero modulo $\Rf$ (this can be computed for any particular parameters $q$ and $r$), so in practice the only remaining condition will be
\begin{equation*}\label{eq:congrD} c_a\equiv 0\pmod\Rf\,,
\end{equation*}
but we will check (\ref{eq:congrE}) itself in every case.

The proofs of the next three lemmas are very similar to each other. They are also similar to the proof of the previous lemma, but this time we will check (\ref{eq:Leg_m1}) and (\ref{eq:congrC}), or (\ref{eq:Leg_m1}) and (\ref{eq:congrE}).

\begin{lem}\label{thm:BGL2} If $d=(an)^2+4a$ is square-free for odd positive integers $a$ and
$n$ with $an>2\cdot 43$,
\begin{equation}a \neq 1,5,7,19,37,43\,,
\end{equation}
$n\equiv 0 \pmod {5\cdot 7\cdot 19\cdot 37}\,$ and $h(d)=1$, then we
have
$$n\equiv 0 \pmod {43}\,.$$

\end{lem}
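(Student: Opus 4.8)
The plan is to run the same sieve argument as in the proof of Lemma \ref{thm:BGL}, but now exploiting the hypothesis $n \equiv 0 \pmod{5 \cdot 7 \cdot 19 \cdot 37}$ to replace the full congruence (\ref{eq:congrB}) by its simplified forms. First I would assume $h(d)=1$ and work toward pinning down $n \bmod 43$. Every arrow I use will involve only the primes $5, 7, 19, 37, 43$, so the hypotheses $an > 2\cdot 43$ and $a \neq 1,5,7,19,37,43$ guarantee $P^{+}(qr) < an/2$ and that $a$ is coprime to $qr$; hence Claim \ref{factB} yields (\ref{eq:Leg_m1}) and Lemma \ref{main} yields (\ref{eq:congrB}) for each such arrow, and the truth of these depends only on the residues of $a$ and $n$ modulo $qr$, hence modulo $M := 5\cdot 7\cdot 19\cdot 37\cdot 43$.

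The key observation is that for any arrow $q \rightarrow r$ in which $q$ is built only from primes in $\{5,7,19,37\}$ we have $q \mid n$, so Lemma \ref{cla:C0} gives $G(f_1,\chi)=0$ and (\ref{eq:congrB}) collapses to (\ref{eq:congrE}); dually, for any arrow $q \rightarrow r$ with $r \in \{5,7,19,37\}$ we have $r \mid n$, hence $n \in \Rf$ and (\ref{eq:congrB}) collapses to (\ref{eq:congrC}). Thus each chosen arrow contributes one of the cheap congruences (\ref{eq:congrC}) or (\ref{eq:congrE}) rather than the full expression.

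Concretely I would use arrows of two shapes. Arrows $q \rightarrow r$ with $43 \mid q$ and $r \in \{5,7,19,37\}$ give, via $r \mid n$ and (\ref{eq:congrC}), a congruence on $G(f_1,\chi)$ whose value depends on $a$ and $n$ modulo $q$, in particular modulo $43$; these are the arrows that actually detect the residue of $n \bmod 43$. Arrows $q \rightarrow 43$ with $q$ built from $\{5,7,19,37\}$ give, via $q \mid n$ and (\ref{eq:congrE}), a congruence in which the factor $n$ survives, so that whenever $43 \nmid n$ the nonvanishing of the remaining factors modulo $\Rf$ forces $c_a \equiv 0 \pmod{\Rf}$, a constraint on $a \bmod 43$. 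Fixing residues $a_0, n_0 \bmod M$ subject to $5\cdot 7\cdot 19\cdot 37 \mid n_0$, I would then check by computer that (\ref{eq:Leg_m1}) together with these simplified congruences can hold for no pair with $43 \nmid n_0$, so that every surviving pair satisfies $43 \mid n_0$, which forces $43 \mid n$.

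The main obstacle is the combinatorial and computational one. For each arrow in the list one must exhibit a genuine arrow, i.e. an explicit odd primitive complex character $\chi \bmod q$ and a prime ideal $\Rf$ of $\Lf_\chi$ above $r$ with $m_\chi \in \Rf$ (typically found through a rational prime $r \mid h_q^{-}$), and one must verify for the particular $q,r$ that the factors declared nonzero modulo $\Rf$ in passing to (\ref{eq:congrC}) and (\ref{eq:congrE}) really are nonzero. Assembling a set of such arrows whose combined congruences eliminate \emph{all} residue classes with $43 \nmid n$ is where the real work lies; once the arrows and their reductions are in place, the residue-class sieve over $(a_0,n_0) \bmod M$ is routine and is relegated to the computation described at \cite{www}.
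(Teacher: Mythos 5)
Your proposal is correct and follows essentially the same route as the paper: reduce (\ref{eq:congrB}) to the simplified congruences via the divisibility hypothesis on $n$, then sieve residue pairs $(a_0,n_0)$ modulo $5\cdot 7\cdot 19\cdot 37\cdot 43$ by computer. The paper in fact gets by with only the three arrows $5\times 43\rightarrow 7,19,37$, all of the first shape you describe (so only (\ref{eq:Leg\_m1}) and (\ref{eq:congrC}) are ever checked here), but that is a detail of which arrows suffice rather than a difference of method.
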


\begin{proof} We apply the arrows
$$5\times 43\rightarrow 7,19,37.$$
One can check again by concrete
computations (finding a suitable character and a suitable prime
ideal in every case) that these are indeed arrows. By our considerations above we know that (\ref{eq:Leg_m1}) and (\ref{eq:congrC}) must be valid because for these three arrows $r$ divides $n$.

 We fix the residue $a_0$ of $a$ and $n_0$ of $n$ modulo $P:=5\cdot 7\cdot
19\cdot 37\cdot 43\,,$ but we consider only such cases when $n_0\equiv 0 \pmod {5\cdot 7\cdot 19\cdot 37}\,.$ For every such fixed pair $0 \le a_0,n_0 < P$ for which $n_0$ satisfies the above congruence we check (\ref{eq:Leg_m1}) and (\ref{eq:congrC}) for each arrows listed
above. We find that if the pair $(a_0,n_0)$ is such that $n_0
> 0$ is true, then either (\ref{eq:Leg_m1}) or (\ref{eq:congrC}) will be false for at
least one arrow. The necessary computer calculations can be found at \cite{www}. The lemma is proved.

\end{proof}

\begin{lem}\label{thm:BGL3} If $d=(an)^2+4a$ is square-free for odd positive integers $a$ and
$n$ with $an>2\cdot 181$,
\begin{equation}a \neq 1,3,5,13,19,37,181\,,
\end{equation}
$n\equiv 0 \pmod {3\cdot 5\cdot 13\cdot 19\cdot 37}\,$ and $h(d)=1$, then we
have
$$n\equiv 0 \pmod {181}\,.$$

\end{lem}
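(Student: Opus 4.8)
The plan is to follow exactly the template established in the proofs of Lemma \ref{thm:BGL} and Lemma \ref{thm:BGL2}, since the statement of Lemma \ref{thm:BGL3} has the identical structure: we are given that $d=(an)^2+4a$ is square-free with $a,n$ odd, that $h(d)=1$, that $a$ avoids a short exceptional list of primes, that $an$ is large enough, and that $n$ is already divisible by the product $3\cdot 5\cdot 13\cdot 19\cdot 37$; we must then force the additional divisibility $181\mid n$. The engine is the collection of arrows $q\to r$ together with the two congruence conditions (\ref{eq:Leg_m1}) and (\ref{eq:congrC}) that they produce whenever $r\mid n$.

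First I would select a suitable family of arrows $q\to r$ in which $q$ is a product of the primes $\{3,5,13,19,37,181\}$ and $r$ is a prime among $\{3,5,13,19,37\}$ (so that the hypothesis $3\cdot 5\cdot 13\cdot 19\cdot 37\mid n$ guarantees $r\mid n$), with the factor $181$ appearing in $q$ so that information about the residue of $n$ modulo $181$ actually enters the congruences. By analogy with Lemma \ref{thm:BGL2}, where the single family $5\times 43\to 7,19,37$ sufficed, I expect a short list such as $5\times 181\to 13,19,37$ (and possibly $3\times 181\to\cdots$ or $13\times 181\to\cdots$) to do the job. For each listed arrow one must verify that it really is an arrow, i.e. exhibit an odd primitive complex character $\chi$ modulo $q$ and a prime ideal $\Rf$ of $\Lf_\chi$ above $r$ with $m_\chi\in\Rf$; this is the concrete computation (typically locating a prime $r$ dividing the relevant relative class number $h_q^{-}$) carried out at the address \cite{www}. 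The size condition $an>2\cdot 181$ together with $a\neq 181$ ensures $P^{+}(qr)<an/2$ and that $a$ is coprime to $qr$, so that Claim \ref{factB} gives (\ref{eq:Leg_m1}) and, since here $r\mid n$, the reduced congruence (\ref{eq:congrC}) holds as well.

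The main combinatorial step is then a finite check modulo $P:=3\cdot 5\cdot 13\cdot 19\cdot 37\cdot 181$. I would fix the residues $a_0$ of $a$ and $n_0$ of $n$ modulo $P$, restricting to those pairs with $0\le a_0,n_0<P$ and $n_0\equiv 0\pmod{3\cdot 5\cdot 13\cdot 19\cdot 37}$, and for every such pair test (\ref{eq:Leg_m1}) and (\ref{eq:congrC}) across all the chosen arrows. Both conditions depend only on the residue of the pair $(a,n)$ modulo $qr$, hence only modulo $P$, so this is a genuinely finite computation. The goal is to show that whenever $n_0>0$ (equivalently $181\nmid n$, given the standing divisibility), some arrow makes either the Legendre-symbol condition or the congruence fail; this forces $n_0=0$, that is $181\mid n$, which is the assertion.

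The hard part will be neither the theoretical setup nor the reduction, both of which are mechanical once the analogous earlier lemmas are in hand, but rather the verification that the chosen arrows are sufficient to eliminate every exceptional pair $(a_0,n_0)$ with $n_0>0$. A priori one cannot be certain that a given small list of arrows will rule out all residues $n_0\not\equiv 0\pmod{181}$; if some survive, one must enlarge the arrow list (bringing in more auxiliary primes in $q$ or additional targets $r$) until the sieve closes. This is exactly the delicate, case-driven computer search described for the previous lemmas, and the supporting calculations would be deferred to \cite{www} rather than reproduced here.
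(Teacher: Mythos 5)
Your overall architecture is the right one and matches the paper's: verify a short list of arrows, then fix residues $(a_0,n_0)$ modulo $P=3\cdot 5\cdot 13\cdot 19\cdot 37\cdot 181$ with $n_0\equiv 0\pmod{3\cdot 5\cdot 13\cdot 19\cdot 37}$, and eliminate every pair with $n_0>0$ by a finite check of the arrow-induced conditions. But your concrete plan has a gap: you restrict attention to arrows in which $181\mid q$ and $r\in\{3,5,13,19,37\}$, so that $r\mid n$ is automatic and the only conditions you ever propose to test are (\ref{eq:Leg_m1}) and (\ref{eq:congrC}). The paper's proof does not run this way. It uses the arrows $181\rightarrow 5,37$ (which are of your type, with $q$ equal to the bare prime $181$) \emph{together with} two arrows pointing into $181$, namely $13\times 19\rightarrow 181$ and $3\times 5\times 19\rightarrow 181$. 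For these latter arrows $q$ divides $n$ by hypothesis, so Lemma \ref{cla:C0} forces $G(f_1,\chi)=0$ and the operative condition is (\ref{eq:congrE}) — in practice essentially $c_a\equiv 0\pmod{\Rf}$ with $\Rf$ above $181$ — not (\ref{eq:congrC}). Your framework never contemplates $r=181$, so this second congruence mechanism is entirely absent from your proposal.

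This omission is not merely cosmetic. First, the arrows you hope for (e.g. $5\times 181\rightarrow 13$, $13\times 181\rightarrow\cdots$) are existence claims about prime ideals above small primes containing $m_\chi$ for characters of conductor $5\cdot 181$, $13\cdot 181$, etc. (typically about divisors of the relative class numbers $h_q^-$); there is no reason to expect they hold, and they are not the arrows the authors found. The acknowledgments make clear that locating arrows \emph{into} the large primes (such as $3315\rightarrow 353$) was the genuinely delicate point of the whole project. Second, even granting some arrows of your type, nothing in your argument shows the resulting sieve closes on all residues $n_0\not\equiv 0\pmod{181}$; the paper's certificate is a computation for its specific mixed set of arrows and for the two distinct congruences (\ref{eq:congrC}) and (\ref{eq:congrE}). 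So while the template is correct, as written your proposal omits one of the two mechanisms the actual verification relies on and replaces the decisive computational content with a hypothesis about arrows that may well be false.
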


\begin{proof} We apply the arrows
$$181\rightarrow 5,37,$$
$$13\times 19\rightarrow 181,$$
$$3\times 5\times 19\rightarrow 181.$$

One can check again by concrete
computations (finding a suitable character and a suitable prime
ideal in every case) that these are indeed arrows. 

 We fix the residue $a_0$ of $a$ and $n_0$ of $n$ modulo $P:=3\cdot 5\cdot
13\cdot 19\cdot 37\cdot 181\,,$ but we consider only such cases when $n_0\equiv 0 \pmod {3\cdot 5\cdot 13\cdot 19\cdot 37}\,.$ For every such fixed pair $0 \le a_0,n_0 < P$ for which $n_0$ satisfies the above congruence we check (\ref{eq:Leg_m1}) and (\ref{eq:congrC}) for the first two arrows $181\rightarrow 5,37$ (here $r$ divides $n$). For the remaining pairs with $n_0>0$ we check (\ref{eq:Leg_m1}) and (\ref{eq:congrE}) ($q$ divides $n$). We find that if the pair $(a_0,n_0)$ is such that $n_0> 0$ is true, then either (\ref{eq:Leg_m1}) or (\ref{eq:congrE}) will be false for at
least one arrow. The necessary computer calculations can be found at \cite{www}. The lemma is proved.

\end{proof}

\begin{lem}\label{thm:BGL4} If $d=(an)^2+4a$ is square-free for odd positive integers $a$ and
$n$ with $an>2\cdot 353$,
\begin{equation}a \neq 1,3,5,13,17,353\,,
\end{equation}
$n\equiv 0 \pmod {3\cdot 5\cdot 13\cdot 17}\,$ and $h(d)=1$, then we
have
$$n\equiv 0 \pmod {353}\,.$$

\end{lem}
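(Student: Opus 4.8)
The plan is to reproduce verbatim the scheme used in the proofs of Lemma \ref{thm:BGL2} and Lemma \ref{thm:BGL3}, now with the prime $353$ playing the role that $43$ and $181$ played there. First I would assemble a short list of arrows $q\rightarrow r$, each verified by direct computation (by exhibiting an odd primitive complex character $\chi$ modulo $q$ and a prime ideal $\Rf$ of $\Lf_\chi$ lying above $r$ with $m_\chi\in\Rf$). I would want the list to contain both arrows of the form $353\rightarrow r$ with $r\in\{3,5,13,17\}$ and arrows of the form $q\rightarrow 353$ with $q$ a product of primes drawn from $\{3,5,13,17\}$. For the arrows $353\rightarrow r$ the target $r$ divides $n$ by hypothesis, so the discussion preceding Lemma \ref{thm:BGL2} shows that (\ref{eq:Leg_m1}) and (\ref{eq:congrC}) must hold; for the arrows $q\rightarrow 353$ the source $q$ divides $n$, so Lemma \ref{cla:C0} forces $G(f_1,\chi)=0$ and (\ref{eq:congrB}) collapses to (\ref{eq:congrE}). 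In each case the hypotheses $an>2\cdot 353$ and $a\neq 1,3,5,13,17,353$ guarantee $P^{+}(qr)=353<an/2$ and that $a$ is coprime to every $qr$, so the set-up of \S\ref{sec:proofThm} applies.

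Next I would fix the residue $a_0$ of $a$ and $n_0$ of $n$ modulo $P:=3\cdot 5\cdot 13\cdot 17\cdot 353$, restricting to the pairs with $n_0\equiv 0\pmod{3\cdot 5\cdot 13\cdot 17}$ permitted by the hypothesis. For each admissible pair $(a_0,n_0)$ the residues of $a$ and $n$ modulo every $qr$ occurring in the list are determined, so the validity of (\ref{eq:Leg_m1}) together with the relevant one of (\ref{eq:congrC}) or (\ref{eq:congrE}) is a finite check. The aim is to confirm that whenever $n_0>0$ at least one of these conditions fails for at least one arrow, so that the only surviving residue is $n_0=0$, which is exactly $353\mid n$.

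The genuinely non-routine step is the very first one: producing a list of arrows that is at once valid and sufficient. The arrows $q\rightarrow 353$ exist only when $353$ divides the appropriate relative class number $h_q^{-}$ (or, more precisely, when $m_\chi$ can be placed in a prime ideal above $353$) for some modulus $q$ built solely from $3,5,13,17$, and there is no guarantee in advance that such $q$ exist or that the combined list eliminates every exceptional pair $(a_0,n_0)$ with $n_0>0$. Searching for these characters and prime ideals, and then running the sieve, is where the computation is concentrated; once the list is fixed the remaining verification is mechanical and is carried out by the SAGE and C++ programs at \cite{www}.
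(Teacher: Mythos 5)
Your proposal follows the paper's proof essentially verbatim: the same sieve modulo $P=3\cdot 5\cdot 13\cdot 17\cdot 353$ restricted to $n_0\equiv 0\pmod{3\cdot 5\cdot 13\cdot 17}$, the same reduction of (\ref{eq:congrB}) to (\ref{eq:congrE}) via Lemma \ref{cla:C0} when $q\mid n$, and the same reliance on computationally verified arrows; the only difference is that the paper manages with just the two arrows $3\times 5\times 17\rightarrow 353$ and $3\times 5\times 13\times 17\rightarrow 353$ (both with $q\mid n$, so only (\ref{eq:Leg_m1}) and (\ref{eq:congrE}) are checked) and needs no arrows of the form $353\rightarrow r$. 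You also correctly identify the genuinely hard step — producing an arrow into $353$ from a modulus built out of $3,5,13,17$ — which the paper's acknowledgments confirm was the one arrow ($3315\rightarrow 353$) not read off from Washington's relative class number tables.
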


\begin{proof} We apply the arrows
$$3\times 5\times 17\rightarrow 353,$$
$$3\times 5\times 13\times 17\rightarrow 353.$$

One can check again by concrete
computations (finding a suitable character and a suitable prime
ideal in every case) that these are indeed arrows. By our considerations above we know that (\ref{eq:Leg_m1}) and (\ref{eq:congrE}) must be valid.

 We fix the
residue $a_0$ of $a$ and $n_0$ of $n$ modulo $P:=3\cdot 5\cdot
13\cdot 17\cdot 353\,,$ but we consider only such cases when $n_0\equiv 0 \pmod {3\cdot 5\cdot 13\cdot 17}\,.$ For every such fixed pair $0 \le a_0,n_0 < P$ for which $n_0$ satisfies the above congruence we check (\ref{eq:Leg_m1}) and (\ref{eq:congrE}) for each arrows listed
above. We find that if the pair $(a_0,n_0)$ is such that $n_0
> 0$ is true, then either (\ref{eq:Leg_m1}) or (\ref{eq:congrE}) will be false for at
least one arrow. The necessary computer calculations can be found at \cite{www}. The lemma is proved.

\end{proof}

We now prove the theorem assuming that $an>2\cdot 353$ and
\begin{equation}a \neq 3,5,7,13,17,19,37,43,73,127,181,353\,.
\end{equation}
Assume $h(d)=1$, then $an>2\cdot 17$ and $a\neq 3,5,7,13,17$ follows from above. Similarly like before for fixed residues $a_0$ of $a$ and $n_0$ of $n$ modulo $P:=3\cdot 5\cdot 7\cdot 13\cdot 17$ we check the conditions (\ref{eq:Leg_m1}) and (\ref{eq:congrC}) for the arrows

$$7\times 17\rightarrow 3,5,13,$$
$$13\times 17\rightarrow 5.$$

We find that if the pair $(a_0,n_0)$ is such that $n_0
> 0$ is true, then either (\ref{eq:Leg_m1}) or (\ref{eq:congrC}) will be false for at
least one arrow. The necessary computer calculations can be found at the address \cite{www}. We get in this way that $17$ divides $n$.\\

Let us also apply Lemma \ref{thm:BGL}. It follows that the conditions of Lemmas \ref{thm:BGL2}, \ref{thm:BGL3} and \ref{thm:BGL4} are satisfied. Then applying these lemmas it follows that $n\equiv 0 \pmod {43\cdot 181\cdot 353}\,$. This contradicts Theorem \ref{thm:La}. Hence our theorem is proved assuming the above two conditions. Since the finitely many cases $an \le 2\cdot 353$ are easily checked (the computations can be found at \cite{www}), it is enough to prove the theorem if $a$ equals one of the values
\begin{equation}3,5,7,13,17,19,37,43,73,127,181,353\,.
\end{equation}
This means that we almost finished the proof, since we reduced our original two-parameter problem to finitely many one-parameter problems. To complete the proof we will prove the theorem for these finitely many values of $a$.

For most of the exceptional cases we can apply exactly the same arrows as in \cite{biro}, for the case of  Yokoi's Conjecture, i.e. for $a=1$. Indeed, for
\begin{equation}\label{eq:excep}a=3,13,17,19,37,43,73,127,181,353\,
\end{equation}
we use the arrows
$$175\rightarrow 1861,61,$$
$$61\rightarrow 1861,$$
$$61\rightarrow 41.$$
We fix the residue $n_0$ of $n$ modulo $P:=41\cdot 61\cdot
175\cdot 1861\,$. For every fixed pair $(a,n_0)$, where $a$ is one of the values given in (\ref{eq:excep}) and $0 \le n_0 < P$ we check (\ref{eq:congrB}) and (\ref{eq:Leg_m1}) for every arrow given above. We find that for every such pair $(a,n_0)$ we get a contradiction for at least one arrow. This proves the theorem for the values in (\ref{eq:excep}) for the case $1861<an/2$. For smaller values of $n$ we can check the statement directly. The details of the computations can be found again at \cite{www}.

It remains to consider the cases $a=5$ and $a=7$.

For $a=5$ we use the arrows
$$61\rightarrow 1861,$$
$$61\rightarrow 41,$$
$$41\rightarrow 11.$$
We fix the residue $n_0$ of $n$ modulo $P:=11\cdot 41\cdot
61\cdot 1861\,$. For $a=5$ and for every fixed $0 \le n_0 < P$ we check (\ref{eq:congrB}) and (\ref{eq:Leg_m1}) for every arrow given above. We find that for every such $n_0$ we get a contradiction for at least one arrow. This proves the theorem for $a=5$ for the case $1861<5n/2$. For smaller values of $n$ we can check the statement directly. The details of the computations can be found at \cite{www}.

For $a=7$ we use the arrows
$$61\rightarrow 1861,$$
$$61\rightarrow 41,$$
$$41\rightarrow 11,$$
$$11,19\rightarrow 61,$$
$$9\rightarrow 11,$$

We fix the residue $n_0$ of $n$ modulo $P:=9\cdot 11\cdot 19\cdot 41\cdot
61\cdot 1861\,$. For $a=7$ and for every fixed $0 \le n_0 < P$ we check (\ref{eq:congrB}) and (\ref{eq:Leg_m1}) for every arrow given above. We find that for every such $n_0$ we get a contradiction for at least one arrow. This proves the theorem for $a=7$ for the case $1861<7n/2$. For smaller values of $n$ we can check the statement directly. The details of the computations can be found at \cite{www}.

The theorem is proved.

\subsection*{Acknowledgments.} We would like to thank L. Washington, R. Schoof and T. Mets\"ankyl\"a for the helpful correspondence which led to finding the arrow $ 3315\rightarrow 353$. This is the only arrow which was not suggested by the table for relative class numbers in Washington's book \cite{wash}.\\

The first author is partially supported by the Hungarian National Foundation for Scientific Research (OTKA) Grants  no. K100291, K104183, K109789 and ERC-AdG. Grant no. 321104. The second author is supported by Back-to-Research Grant of University of Vienna and partially supported by OTKA no. K104183.

\end{document}